\newtheorem{theorem}{Theorem}[section]
\newtheorem{lemma}[theorem]{Lemma}
\newtheorem*{conjecture}{Conjecture}
\newtheorem{corollary}[theorem]{Corollary}
\newtheorem{definition}{Definition}
\newtheorem*{remark}{Remark}
\newtheorem*{remarks}{Remarks}
\numberwithin{equation}{section}
\def \Ihat  {\widehat{I}}
\def \BOhat {\widehat{BO}}
\def \BUhat {\widehat{BU}}
\def \B     {\mathcal{B}}
\def \spw {spider's web}
\newcommand{\tef}{transcendental entire function}
\newcommand\qfor{\quad\text{for }}
\newcommand \C{\mathbb{C}}
\newcommand \N{\mathbb{N}}
\def\blfootnote{\xdef\@thefnmark{}\@footnotetext}
\begin{document}
%
%
%
%
\title[Dynamical sets whose union with infinity is connected]{Dynamical sets whose union with infinity is connected}
\author{David J. Sixsmith}
\address{Dept. of Mathematical Sciences \\
	 University of Liverpool \\
   Liverpool L69 7ZL\\
   UK \\ ORCiD: 0000-0002-3543-6969}
\email{djs@liverpool.ac.uk}
%
%
%
%
\begin{abstract}
Suppose that $f$ is a {\tef}. In 2014, Rippon and Stallard showed that the union of the escaping set with infinity is always connected. In this paper we consider the related question of whether the union with infinity of the bounded orbit set, or the bungee set, can also be connected. We give sufficient conditions for these sets to be connected, and an example a {\tef} for which all three sets are simultaneously connected. This function lies, in fact, in the Speiser class.

It is known that for many {\tef}s the escaping set has a topological structure known as a spider's web. We use our results to give a large class of functions in the Eremenko-Lyubich class for which the escaping set is not a {\spw}. Finally we give a novel topological criterion for certain sets to be a {\spw}.
\end{abstract}
\maketitle
%
%
%
%
\blfootnote{2010 \itshape Mathematics Subject Classification. \normalfont Primary 37F10; Secondary 30C65, 30D05.}
\section{Introduction}
Let $f$ be an entire function. When studying complex dynamics it is usual to partition the complex plane into two sets; the \emph{Julia set} $J(f)$, which contains those points in a neighbourhood of which the iterates of $f$ are chaotic, and its complement the \emph{Fatou set} $F(f) = \C \setminus J(f)$. The Fatou set is open, and its connected components are called \emph{Fatou components}. For more information on complex dynamics, including precise definitions  and properties of these sets, we refer to \cite{MR1216719}.

Recently, several authors have worked with an alternative partition. This divides the plane into three sets determined by the nature of the orbits of points; the \emph{orbit} of a point $z\in\C$ is the sequence $(f^n(z))_{n\geq 0}$ of its iterates under $f$. We define these sets as follows. Firstly, the \emph{escaping} set is given by
\[
I(f) := \{ z \in \C : f^n(z) \rightarrow \infty \text{ as } n \rightarrow \infty \}.
\]
The escaping set for a general \tef\ $f$ was first studied by Eremenko \cite{MR1102727}. He showed that $ I(f) \cap J(f) \neq \emptyset $, and that all components of $ \overline{I(f)} $ are unbounded. He also conjectured that the same is true of all components of~$ I(f) $. This conjecture, which is still open, has since been the focus of much research in complex dynamics.

Secondly, the \emph{bounded orbit} set is defined by
\[
BO(f) := \{ z \in \C : \text{there exists } K > 0 \text{ such that } |f^n(z)| < K, \text{ for } n \geq 0 \}.
\]
When $f$ is a polynomial, $BO(f)$ is known as the \emph{filled Julia set}, and has been much studied. The set $BO(f)$ for a {\tef} $f$ was studied in \cite{MR2869069} and \cite{MR3118409}. 

Finally, the \emph{bungee} set is defined simply as $BU(f) := \C \setminus (I(f) \cup BO(f))$. It is easy to see that if $P$ is a polynomial, then there is a punctured neighbourhood of infinity that lies in $I(P)$, and so $BU(P)$ is empty. However, if $f$ is transcendental, then $BU(f)$ is non-empty. In fact, see \cite[Theorem 5.1]{OsSix}, the Hausdorff dimension of $BU(f) \cap J(f)$ is greater than zero. The properties of $BU(f)$ were studied in \cite{Lazebnik2017611} and in \cite{OsSix}. 

From here onwards we assume that $f$ is transcendental. It is now known that the set $\Ihat(f) := I(f) \cup \{\infty\}$ is a connected subset of the Riemann sphere; see \cite[Theorem 4.1]{MR2801622}. Moreover, see \cite[Theorem 1.1]{Kfc}, the same property holds for $I(f) \cup BU(f) \cup \{\infty\}$. Our principle interest in this paper is to ask if there are conditions that ensure that one or both of the sets 
\[
\BOhat(f) := BO(f) \cup \{\infty\} \quad\text{ and }\quad \BUhat(f) := BU(f) \cup \{\infty\}
\]
can also be connected. In fact we have the following result.
\begin{theorem}
\label{theo:example}
There is a {\tef} $f$ such that each of the sets $\Ihat(f)$, $\BOhat(f)$ and $\BUhat(f)$ is connected.
\end{theorem}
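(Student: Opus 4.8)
The plan is to exhibit an explicit {\tef} for which all three dynamical sets admit the desired connectedness. Since $\Ihat(f)$ is automatically connected by \cite[Theorem 4.1]{MR2801622}, the real work concerns $\BOhat(f)$ and $\BUhat(f)$, and the natural strategy is to engineer a function whose phase space has a rigid, recognisable geometric structure. My first step would be to search within a well-understood class of functions, and the abstract's mention of the Speiser class strongly suggests looking at functions built from finitely many singular values, where the tract structure near infinity can be described explicitly. A promising candidate is a function whose escaping set forms a \spw, since for such functions the complementary set $BO(f) \cup BU(f)$ is organised around a nested sequence of separating curves (for instance, $I(f)$ being a \spw\ means there are bounded Jordan domains $G_n$ with $\partial G_n \subset I(f)$ exhausting the plane), and this nesting is exactly the kind of structure one can exploit to prove connectedness of the complementary pieces together with $\infty$.

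The key steps, in order, would be as follows. First, select a specific $f$ (for example of the form $f(z) = \tfrac12(\cos z^{1/4} + 1)$-type functions, or a Poincar\'e-linearised or sine-family example) whose Fatou and escaping structure is already documented or can be computed directly; I would aim for a function with a single completely invariant (or at least well-controlled) attracting basin so that $BO(f)$ has a large, connected unbounded-looking part. Second, I would establish the geometric skeleton: produce an explicit nested sequence of closed curves $\gamma_n$ surrounding the origin, tending to infinity, each lying in the appropriate set, so that the relevant dynamical set accumulates on $\infty$ from all directions. Third, using these curves I would show $\BOhat(f)$ is connected by verifying that every point of $BO(f)$ can be joined to $\infty$: concretely, that $BO(f)$ meets every neighbourhood of $\infty$ and that its closure in the sphere is connected — typically via showing $BO(f)$ contains an unbounded connected set whose closure contains $\infty$. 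Fourth, the analogous argument for $\BUhat(f)$, which is the most delicate because $BU(f)$ is generally a ``thin'' fractal set; here I would rely on the spider's-web geometry to trap bungee points between consecutive separating curves and to connect them to $\infty$.

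The main obstacle I expect is the connectedness of $\BUhat(f)$. The bungee set is, by \cite[Theorem 5.1]{OsSix}, a genuinely nontrivial set of positive Hausdorff dimension sitting inside $J(f)$, and it has no reason a priori to be connected even after adding $\infty$; indeed for a typical \tef\ one would expect $BU(f)$ to be totally disconnected within any bounded region. The trick must therefore be to choose $f$ so that $BU(f)$ contains an unbounded \emph{connected} subset accumulating at $\infty$, which is a strong structural demand. I would try to arrange this by building the function so that certain curves in the dynamical plane escape to infinity under iteration in an oscillating fashion (staying bounded along a subsequence but escaping along another), forcing an entire arc into $BU(f)$; controlling such oscillation explicitly is exactly the hard analytic core. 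A secondary difficulty is verifying membership of the chosen curves in the correct sets simultaneously — showing that adding $\infty$ does not accidentally merge $BO(f)$ with $I(f)$ in a way that contradicts the partition — and confirming that the example genuinely lies in the Speiser class $\mathcal{S}$ rather than merely in the Eremenko--Lyubich class, which requires checking that the full set of singular values (critical and asymptotic) is finite.
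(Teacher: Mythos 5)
Your proposal fails at its central step: you propose to build the example around a function whose escaping set is a {\spw}, but that property is precisely what makes $\BOhat(f)$ and $\BUhat(f)$ \emph{disconnected}. If $I(f)$ is a {\spw}, then every point $z \in BO(f) \cup BU(f)$ lies in one of the bounded domains $G_n$ with $\partial G_n \subset I(f)$; since $I(f)$ is disjoint from $BO(f) \cup BU(f)$, the two sets $\BOhat(f) \cap G_n$ and $\BOhat(f) \setminus \overline{G_n}$ are disjoint, relatively open, nonempty (the latter contains $\infty$), and cover $\BOhat(f)$ --- a separation. The same argument applies to $\BUhat(f)$. So the nested separating curves you hoped to exploit are exactly the obstruction to connectedness, not a tool for it; the paper makes this explicit and in fact proves the equivalence (Corollary~\ref{cor:conn}) that $I(f)$ is a {\spw} if and only if $\BOhat(f) \cup \BUhat(f)$ is disconnected. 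Your secondary strategies are also not adequate: exhibiting one unbounded connected subset of $BO(f)$ whose closure contains $\infty$ does not connect the \emph{rest} of $BO(f)$ to $\infty$; ``closure in the sphere is connected'' is not the relevant criterion, since $\BOhat(f)$ is not closed; and producing an entire arc inside $BU(f)$ by controlled oscillation is exactly the hard construction that (as far as is known) nobody can carry out, and which the paper entirely avoids.

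The paper's route never exhibits any connected subset of $BO(f)$ or $BU(f)$ at all. It proves two soft sufficient conditions: (i) if $f$ has an unbounded Fatou component in $BO(f)$, then $\BOhat(f)$ is connected (Theorem~\ref{theo:BOhat}); (ii) if $f$ has a finite logarithmic asymptotic value lying in $J(f)$, then $\BUhat(f)$ is connected (Theorem~\ref{theo:BUhat}). Both are proved by contradiction: a separation would yield a bounded open set $U$ meeting the relevant set whose boundary misses it, and this is ruled out by boundary lemmas (the blowing-up property of $J(f)$, the harmonic-measure result on wandering-domain boundaries, and, for $BU(f)$, the tract Lemma~\ref{lemm:tract} combined with the itinerary Lemma~\ref{lemm:RS}, which together produce a point of $\partial U \cap BU(f)$ whose orbit oscillates between a neighbourhood of the asymptotic value and neighbourhoods of $\infty$ --- the oscillation happens at the level of a single point, not an arc). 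The example is then completely explicit: $f(z) = \alpha \operatorname{erf}(z) + \beta$, whose only singular values are the two logarithmic asymptotic values $\pm\alpha + \beta$ (so $f$ is in the Speiser class automatically), with $\alpha, \beta$ chosen so that $\alpha + \beta$ is a parabolic fixed point, hence in $J(f)$, giving (ii), while $-\alpha + \beta$ lies in the parabolic basin, forcing an unbounded Fatou component inside $BO(f)$ and giving (i). Since $\Ihat(f)$ is connected for every {\tef}, all three conclusions hold for this $f$.
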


\begin{remark}\normalfont
The function $f$ in Theorem~\ref{theo:example} has only two singular values (points at which it is not possible to define some inverse branch). It follows that $f$ is in the \emph{Speiser} class $\mathcal{S}$, which consists of those {\tef}s for which the set of singular values is finite.
\end{remark}

Since $\Ihat(f)$ is always connected, in order to prove Theorem~\ref{theo:example} we give sufficient conditions for $\BOhat(f)$ or $\BUhat(f)$ to be connected. We then show that there is a {\tef} with the necessary properties. The first result is as follows.

\begin{theorem}
\label{theo:BOhat}
Suppose that $f$ is a {\tef}. If $f$ has an unbounded Fatou component in $BO(f)$ (resp. $BU(f)$), then $\BOhat(f)$ (resp. $\BUhat(f)$) is connected.
\end{theorem}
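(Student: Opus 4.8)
The plan is to fix the unbounded Fatou component $U \subseteq BO(f)$ supplied by the hypothesis and to prove that the connected component $K$ of $\BOhat(f)$ containing $\infty$ is in fact all of $\BOhat(f)$. Since $U$ is unbounded, $\infty$ is a limit point of $U$, so $U \cup \{\infty\}$ is connected; hence $U \cup \{\infty\} \subseteq K$ and in particular $\infty \in K$. I will use throughout that $K$, being a connected component, is closed in the subspace $\BOhat(f)$, so that $\overline{K} \cap \BOhat(f) = K$, where the closure is taken in the Riemann sphere.

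The engine of the argument is the backward orbit of $U$. First I claim that every component $V$ of $f^{-n}(U)$ is again an unbounded Fatou component contained in $BO(f)$. That $V \subseteq BO(f)$ is immediate: if $f^n(z) \in U \subseteq BO(f)$, then the orbit of $z$ differs from a bounded orbit by finitely many finite terms, hence is bounded. That $V$ is unbounded follows from a properness argument: $f$ maps $V$ onto $U$ with at most one omitted point, so $f(V)$ is unbounded, whereas if $V$ were bounded then $f(\overline{V})$ would be compact, hence bounded, and would contain $f(V)$, a contradiction. Consequently the set $W := \{\infty\} \cup \bigcup_{n \ge 0} f^{-n}(U)$ is a union of sets of the form $(\text{unbounded component}) \cup \{\infty\}$, each connected and each containing $\infty$, so $W$ is connected and $W \subseteq K$. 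Choosing a non-exceptional point of $U$ and invoking the standard density of its backward orbit in $J(f)$, I obtain $\overline{W} \supseteq J(f) \cup \{\infty\}$; since $K$ is closed in $\BOhat(f)$, this gives $J(f) \cap BO(f) \subseteq K$.

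It remains to show that $F(f) \cap BO(f) \subseteq K$, so let $V$ be any Fatou component with $V \subseteq BO(f)$. If $\partial V$ contains a point $p$ of $BO(f)$, then $p \in J(f) \cap BO(f) \subseteq K$, while $V \cup \{p\}$ is connected and contained in $\BOhat(f)$ (as $p$ is a limit point of $V$); hence $V \cup \{p\}$ lies in the component of $p$, forcing $V \subseteq K$. Running this over all such $V$ completes the proof, and the argument for $\BUhat(f)$ is identical after replacing $BO$ by $BU$ throughout, using that preimages of a $BU$-component remain in $BU(f)$, that $I(f) \cup BU(f) \cup \{\infty\}$ is connected, and the corresponding density of $J(f) \cap BU(f)$ in $J(f)$.

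The main obstacle is precisely this last reduction: I must rule out a bounded-orbit Fatou component $V$ whose boundary lies entirely in $I(f) \cup BU(f)$, that is, I must show $\partial V$ always meets $BO(f)$. For a periodic component I expect to exhibit a point of $BO(f)$ on $\partial V$ by producing a periodic point there (for instance via the snail lemma, or via density of repelling cycles together with the local structure of the boundary of an attracting or parabolic basin), since periodic points lie in $BO(f)$. Bounded-orbit wandering domains are the genuinely delicate case, and here I would argue from the accumulation of the unbounded backward-orbit components of $U$ on $\partial V$, seeking a bounded-orbit point of $J(f)$ accessible from within $V$. This dynamical input about boundaries of Fatou components is where the real work lies; everything preceding it is soft point-set topology.
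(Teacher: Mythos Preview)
Your architecture is sound and matches the paper's in spirit: both arguments reduce to showing that every bounded Fatou component $V\subset BO(f)$ satisfies $\partial V\cap BO(f)\ne\emptyset$, and your backward-orbit treatment of the Julia part is correct and is essentially the blowing-up argument in different clothing. The gaps are in the Fatou part, which you yourself flag as the real work.

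For preperiodic $V$, locating a periodic point on $\partial V$ is harder than necessary and not clearly valid in general: for a bounded Siegel disk it is an open problem whether the boundary must contain a periodic point, and density of repelling cycles in $J(f)$ does not place one on the \emph{closed} set $\partial V$. The paper's route is much simpler. If $V$ is bounded then every forward-iterate Fatou component $V_n$ is bounded too, since $V$ is a component of $f^{-n}(V_n)$ and a bounded preimage component maps properly, hence onto, forcing $V_n$ bounded; thus the orbit of any $z\in\partial V$ stays in the finite bounded union $\bigcup_n\overline{V_n}$, giving $\partial V\subset BO(f)$ outright. The wandering case is the genuine obstruction, and your sketch does not close it: the backward-orbit components of $U$ accumulate on $\partial V$ but need not meet it, so you get $BO(f)$-points arbitrarily near $\partial V$, not on it. The paper invokes a specific non-trivial input, namely Lemma~\ref{lemm:wandering} (from Osborne--Sixsmith): for a wandering domain $V$, the set $\{z\in\partial V:\omega(z,f)\ne\omega(V,f)\}$ has harmonic measure zero relative to $V$. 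Hence almost every boundary point of a wandering domain in $BO(f)$ (resp.\ $BU(f)$) is itself in $BO(f)$ (resp.\ $BU(f)$). Without this harmonic-measure result or an equivalent, the wandering case cannot be completed along the lines you propose.
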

\begin{remark}\normalfont
The usefulness of the second part of this theorem is limited by the fact that there are relatively few examples of {\tef}s with Fatou components in $BU(f)$. Examples of a {\tef} with such a Fatou component were given in \cite{Bish3} and \cite{MR918638}. The only examples of Fatou components in $BU(f)$ which are also known to be unbounded were given in \cite{Lazebnik2017611} and \cite{OsSix}.
\end{remark}

The second result requires the notion of a finite logarithmic asymptotic value, which we define as follows. We use the notation 
\[
B(a, r) := \{ z \in \C : |z - a| < r \}, \qfor a \in \C \text{ and } r > 0. 
\]
\begin{definition}
Suppose that $f$ is a {\tef}. A value $\alpha\in\C$ is a \emph{finite logarithmic asymptotic value} of $f$ if there exist $r>0$ and a component $U$ of $f^{-1}(B(\alpha,r))$, such that the restriction $f :U \to B(\alpha, r) \setminus\{\alpha\}$ is a universal covering.
\end{definition}

Our sufficient condition for the connectedness of $\BUhat(f)$ is as follows.

\begin{theorem}
\label{theo:BUhat}
Suppose that $f$ is a {\tef}. If $f$ has a finite logarithmic asymptotic value $\alpha \in J(f)$, then $\BUhat(f)$ is connected.
\end{theorem}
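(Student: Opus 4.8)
The plan is to deduce the result from an elementary topological criterion: for $X\subseteq\C$, the set $X\cup\{\infty\}$ is connected if and only if $X$ has no nonempty, bounded, relatively open-and-closed subset. For us the convenient consequence is that it is \emph{enough} to prove that every point of $BU(f)$ lies in some \emph{unbounded} connected subset of $BU(f)$: any relatively clopen set containing such a point then contains the whole unbounded connected set through it, and so cannot be bounded. Thus the problem reduces to manufacturing unbounded connected subsets of $BU(f)$, and the role of the hypothesis is to supply them out near infinity, where they can reach $\infty$ and so lie in the component of $\infty$ in $\BUhat(f)$.

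The mechanism for producing connected unbounded sets is the universal covering $f\colon U\to B(\alpha,r)\setminus\{\alpha\}$. I would first pass to logarithmic coordinates, choosing a conformal map $\phi$ from a right half-plane onto $U$ with $f(\phi(w))=\alpha+re^{-w}$, so that $U$ is an unbounded, simply connected tract on which $f$ behaves exactly like an exponential. The key feature is monodromy: a curve in $B(\alpha,r)\setminus\{\alpha\}$ that winds infinitely often around $\alpha$, equivalently spirals down to $\alpha$, lifts under $f$ to a single connected arc in $U$ that is unbounded, since infinite winding about $\alpha$ corresponds to $\operatorname{Re}w\to\infty$, i.e. to tending to infinity inside $U$. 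Because $BU(f)$ is completely invariant, that is $f^{-1}(BU(f))=BU(f)$, such a lift lies in $BU(f)$ as soon as the curve it covers does. This is exactly where connectedness is created: a purely symbolic itinerary construction near $\alpha$ would only produce a totally disconnected Cantor set of bungee points, whereas the covering converts genuine winding at $\alpha$ into genuine connectivity at infinity.

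It therefore remains to locate, accumulating at $\alpha$, a connected winding set contained in $BU(f)$, and to join an arbitrary given bungee point to the resulting unbounded arc. Here I would exploit $\alpha\in J(f)$ in two ways. First, the chaotic dynamics near $\alpha$ supply the two competing behaviours needed for a bungee orbit: a nearby repelling cycle, whose bounded orbit gives the required bounded subsequence, together with the blowing-up property of $J(f)$, which maps a neighbourhood of $\alpha$ over arbitrarily large parts of the tract and so gives the required unbounded subsequence. My aim would be to realise these as a \emph{continuous} family, by pulling a small continuum back through carefully chosen inverse branches so that the result is a connected set of bungee points shrinking onto $\alpha$ and winding about it. Second, complete invariance lets one carry any prescribed point of $BU(f)$ into the tract along a backward orbit, attaching it to the unbounded arc inside $BU(f)$.

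The main obstacle is precisely the construction of a \emph{connected} winding family of bungee points converging to $\alpha$: such points are plentiful but, a priori, totally disconnected, so the difficulty is to organise them along a single arc rather than as dust. I expect to resolve this entirely in the covering picture, where the requirement becomes that of producing an unbounded connected subset of $U$ whose forward orbit alternates, in a controlled fashion, between a bounded region near the repelling cycle and the deep part of the tract. The strong expansion of $f$ on $U$ afforded by the exponential model should keep the relevant nested preimage sets nonempty, and the winding dictated by the monodromy should keep them linked into one arc; verifying that these nested sets can be taken connected, and that the alternation can be imposed continuously along the arc, is the step I anticipate will require the most care.
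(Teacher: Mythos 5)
There is a genuine gap, and it is structural rather than technical. Your reduction is logically valid but overshoots: you replace the theorem by the strictly stronger claim that \emph{every} point of $BU(f)$ lies in an \emph{unbounded connected} subset of $BU(f)$. Connectedness of $\BUhat(f)$ does not require this: a totally disconnected set can have connected union with $\{\infty\}$ (this is the classical ``explosion point'' phenomenon, realised dynamically by the endpoint set of a Cantor bouquet, which is totally disconnected yet becomes connected when $\infty$ is added). Nothing in the hypothesis that $f$ has a finite logarithmic asymptotic value $\alpha \in J(f)$ gives any reason for $BU(f)$ to contain even one nondegenerate continuum; the known constructions of bungee points are Cantor-type constructions producing dust. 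You yourself flag the construction of a connected winding family of bungee points accumulating at $\alpha$ as the unresolved core, and that step may simply be impossible at this level of generality (consider $f(z)=e^z$, where $\alpha = 0 \in J(f)=\C$ and any connected set of bungee points containing an endpoint of a hair cannot contain any other point of that hair, since those points escape). Moreover, the second half of your plan fails even if the first half were granted: pulling a prescribed point $z \in BU(f)$ back ``along a backward orbit'' into the tract produces a preimage $w$ with $f^n(w) = z$, and the connected set through $z$ itself would have to be $f^n(C)$ for your unbounded lift $C$; but $C$ lies in the tract, $f(\text{tract}) = B(\alpha,r)\setminus\{\alpha\}$ is bounded, and $f$ is entire, so $f^k(C)$ is bounded for every $k \geq 1$. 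Unboundedness is never transported forward to $z$, so no unbounded connected set through $z$ is produced.

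The paper's proof is designed precisely to avoid needing any connected subsets of $BU(f)$. It proves the boundary-bumping statement (Lemma~\ref{lemm:BUhat}): every bounded domain $U$ meeting $BU(f)$ satisfies $\partial U \cap BU(f) \neq \emptyset$. This suffices, because a disconnection of $\BUhat(f)$ would yield a bounded open set meeting $BU(f)$ whose boundary misses $BU(f)$. The single boundary point is manufactured as follows: the blowing-up property (Lemma~\ref{lemm:blowup}) makes $f^{n_1}(\partial U)$ contain a continuum lying outside a large disc and surrounding the relevant piece $V$ of the tract; the geometric lemma about the tract (Lemma~\ref{lemm:tract}) — which is where the winding/covering behaviour near $\alpha$ that you invoke actually enters, but applied to images of $\partial U$ rather than to subsets of $BU(f)$ — shows the next iterate of that continuum surrounds $\alpha$ inside $B(\alpha,r)$; since $\alpha \in J(f)$ the construction can be iterated; and finally the nested-compact-sets lemma (Lemma~\ref{lemm:RS}) extracts a single point $\zeta \in \partial U$ whose orbit alternates forever between $\C \setminus B(0,R_k)$, with $R_k \to \infty$, and $\overline{B(\alpha,r)}$, so that $\zeta \in BU(f)$. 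To repair your proposal you would essentially have to abandon the ``unbounded connected subsets'' reduction in favour of this pointwise alternation argument.
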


\begin{remark}\normalfont
In \cite{OsSix} the authors asked if there is a {\tef} $f$ such that $BU(f)$ is connected. Although this question is still open, these results give at least a partial answer to this question.
\end{remark}

Recent study of $I(f)$ has shown that this set often has a topological structure known as a {\spw}. The following definition of a {\spw} was first given in \cite{Rippon01102012}.
\begin{definition}
A connected set $E\subset\C$ is a \emph{\spw} if there exists a sequence of bounded simply connected domains, $(G_n)_{n\in\N}$, such that 
\[
\partial G_n \subset E, \ G_n \subset G_{n+1}, \text{ for } n\in\N, \text{ and } \bigcup_{n\in\N} G_n = \C.
\]
\end{definition}
Clearly if $I(f)$ is a {\spw}, then neither $\BOhat(f)$ nor $\BUhat(f)$ can be connected. In fact, in some sense, the converse is also true; if $\BOhat(f) \cup \BUhat(f)$ is disconnected, then $I(f)$ is a {\spw}. See Corollary~\ref{cor:conn} below.

There are now many examples of {\tef}s $f$ such that $I(f)$ is a {\spw}; see, for example, \cite{Vasso1}, \cite{Rippon01102012} and \cite{MR2838342}. However, none of these examples are in the much studied \emph{Eremenko-Lyubich} class $\B$; this class consists of those {\tef}s for which the set of singular values is bounded. The techniques used to prove our earlier results can be used to show that there is a large subclass of class $\B$ for which the escaping set is not a {\spw}.
\begin{theorem}
\label{theo:classb}
Suppose that $f \in \B$ is a {\tef}. If $f$ has a finite logarithmic asymptotic value, then $I(f)$ is not a {\spw}.
\end{theorem}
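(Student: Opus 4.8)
The plan is to argue by contradiction: I assume that $I(f)$ is a \spw\ and derive a contradiction, using two elementary consequences of this assumption. First, as noted just before the statement, if $I(f)$ is a \spw\ then $\BUhat(f)$ is disconnected. Second, if $I(f)$ is a \spw\ with associated domains $(G_n)$, then every component of $\C\setminus I(f)$ is bounded; indeed, given such a component $C$ and a point $z\in C$, choosing $n$ with $z\in G_n$ and using that $\partial G_n\subset I(f)$ is disjoint from $C$, the connectedness of $C$ forces $C\subset G_n$. The third ingredient is the theorem of Eremenko and Lyubich that $I(f)\subset J(f)$ for every $f\in\B$.

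Let $\alpha$ be the finite logarithmic asymptotic value and let $U$ be a component of $f^{-1}(B(\alpha,r))$ for which $f\colon U\to B(\alpha,r)\setminus\{\alpha\}$ is a universal covering. I would split into two cases according to whether $\alpha\in J(f)$ or $\alpha\in F(f)$. If $\alpha\in J(f)$, then Theorem~\ref{theo:BUhat} applies and gives that $\BUhat(f)$ is connected, which contradicts the first consequence above; this case requires nothing further.

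If instead $\alpha\in F(f)$, I would contradict the second consequence by exhibiting an unbounded component of $\C\setminus I(f)$. Since $F(f)$ is open, I can pick $\rho\in(0,r]$ with $B(\alpha,\rho)\subset F(f)$; let $U'$ be the component of $f^{-1}(B(\alpha,\rho))$ contained in $U$, so that $f\colon U'\to B(\alpha,\rho)\setminus\{\alpha\}$ is again a universal covering. The set $U'$ must be unbounded, for if it were bounded then the standard boundary argument (points of $\partial U'$ satisfy $|f-\alpha|=\rho$) would make $f$ restrict to a proper, hence finite-degree, map of $U'$ onto $B(\alpha,\rho)$, which is incompatible with $f$ omitting the value $\alpha$ throughout $U'$ while covering the punctured disc infinitely often. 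Moreover, by complete invariance of the Fatou set, $U'\subset f^{-1}(F(f))=F(f)$; combined with $I(f)\subset J(f)$ this gives $U'\cap I(f)=\emptyset$. Hence $U'$ is an unbounded connected subset of $\C\setminus I(f)$, and so $\C\setminus I(f)$ has an unbounded component, the required contradiction.

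The main obstacle is the Fatou case $\alpha\in F(f)$. Two points there need care: checking that the tract $U'$ is genuinely unbounded as a subset of $\C$ (rather than merely simply connected, which it is automatically), and deploying the class-$\B$ hypothesis in the form $I(f)\subset J(f)$ to push the whole tract out of the escaping set. It is precisely this Fatou case that is not covered by Theorem~\ref{theo:BUhat}, and precisely here that the assumption $f\in\B$ is used; the Julia case follows immediately from the earlier result.
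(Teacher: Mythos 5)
Your proof is correct, and its skeleton (the case split $\alpha\in J(f)$ versus $\alpha\in F(f)$, with the Julia case settled by Theorem~\ref{theo:BUhat}) matches the paper's; the difference lies in how the Fatou case is closed out. The paper deduces Theorem~\ref{theo:classb} from a stronger intermediate result, Theorem~\ref{theo:noloop}: for any {\tef} with a finite logarithmic asymptotic value $\alpha \notin F(f)\cap I(f)$, the set $I(f)$ separates \emph{no finite point} from infinity; the class-$\B$ hypothesis enters only through the Eremenko--Lyubich fact $I(f)\subset J(f)$, which makes the hypothesis on $\alpha$ automatic. In the Fatou case the paper notes $\alpha\in BO(f)\cup BU(f)$ and invokes Theorem~\ref{theo:BOhat} --- implicitly using the same observation you make explicitly, namely that for small $\rho$ the tract over $\alpha$ is unbounded and lies in $F(f)$, so that there is an unbounded Fatou component in $BO(f)$ or $BU(f)$, whence $\BOhat(f)$ or $\BUhat(f)$ is connected and $I(f)$ cannot be a {\spw}. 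Your Fatou case bypasses Theorem~\ref{theo:BOhat} and the $BO$/$BU$ dichotomy altogether: you use the tract directly as an unbounded connected subset of $\C\setminus I(f)$ (complete invariance of $F(f)$ together with $I(f)\subset J(f)$), contradicting the elementary fact that every complementary component of a {\spw} is bounded. What each approach buys: yours is more self-contained and elementary for the theorem as stated --- and your properness argument for the unboundedness of the tract spells out a detail the paper leaves implicit --- but it uses the class-$\B$ hypothesis more heavily (to push the entire tract out of $I(f)$); the paper's route costs more machinery but isolates Theorem~\ref{theo:noloop}, which is of independent interest since it applies outside class $\B$ and yields the stronger ``separates no finite point'' conclusion rather than merely ``not a {\spw}''.
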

In fact we conjecture the following.
\begin{conjecture}
If $f \in \B$ is a {\tef}, then $I(f)$ is not a {\spw}. 
\end{conjecture}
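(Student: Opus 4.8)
The plan is to argue by contradiction: assuming $I(f)$ is a \spw, I would produce a nonempty unbounded connected subset of $\C\setminus I(f)$, which no \spw\ can admit. Indeed, suppose $E$ is a \spw\ with associated domains $(G_n)_{n\in\N}$ as in the definition, and let $C\subset\C$ be connected, unbounded and disjoint from $E$. Then $C$ avoids each $\partial G_n\subset E$; since $\C\setminus\partial G_n=G_n\sqcup(\C\setminus\overline{G_n})$ is a disjoint union of two open sets and $C$ is unbounded while $G_n$ is bounded, connectedness forces $C\subset\C\setminus\overline{G_n}$, and hence $C\cap G_n=\emptyset$, for every $n$. As $\bigcup_n G_n=\C$ this gives $C=\emptyset$, a contradiction. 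So it suffices to exhibit an unbounded connected subset of $\C\setminus I(f)$, and I would split the argument according to whether the finite logarithmic asymptotic value $\alpha$ lies in $J(f)$ or in $F(f)$.

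Suppose first that $\alpha\in J(f)$. Then Theorem~\ref{theo:BUhat} applies directly and shows that $\BUhat(f)$ is connected. On the other hand, as already observed in the introduction, if $I(f)$ were a \spw\ then $\BUhat(f)$ would be disconnected. This contradiction settles the case $\alpha\in J(f)$, and it is precisely to avoid a direct construction in this case that I route it through Theorem~\ref{theo:BUhat}.

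Now suppose that $\alpha\in F(f)$, and let $U$ and $r>0$ be as in the definition of a finite logarithmic asymptotic value, so that $f\colon U\to B(\alpha,r)\setminus\{\alpha\}$ is a universal covering. Let $D$ be the Fatou component containing $\alpha$, and choose $\epsilon\in(0,r)$ so small that the circle $\sigma:=\{w:|w-\alpha|=\epsilon\}$ lies in $D$. The key construction is to lift $\sigma$ through the covering: starting from a point $z_0\in f^{-1}(\sigma)\cap U$ and successively lifting $\sigma$, which generates $\pi_1(B(\alpha,r)\setminus\{\alpha\})$, I obtain a connected arc $L\subset U$ passing through the infinitely many points of the fibre $f^{-1}(z_0')\cap U$, where $z_0'=\sigma(0)$. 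Because these preimages are isolated and infinite in number they have no finite limit point, so $L$ is unbounded. Since $f\in\B$ we have $I(f)\subset J(f)$, and since $F(f)$ is completely invariant, the identity $f(L)=\sigma\subset D\subset F(f)$ yields $L\subset F(f)\subset\C\setminus I(f)$. Thus $L$ is an unbounded connected subset of $\C\setminus I(f)$, and the reduction of the first paragraph shows that $I(f)$ is not a \spw.

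The delicate points are the following. The genuine obstacle is the case $\alpha\in J(f)$, where $\alpha$ is a limit of escaping points and no small loop about $\alpha$ can be expected to avoid $I(f)$; offloading this case to Theorem~\ref{theo:BUhat} is what makes the proof go through. In the Fatou case the only step needing care is the unboundedness of the lift $L$: I expect this to follow cleanly from the fact that the fibre of the covering over a single point is an infinite discrete, hence unbounded, subset of $\C$, but one should confirm, using the standard logarithmic description of the tract $U$, that the successive lifts genuinely escape to infinity rather than merely accumulating there. Finally, the reduction lemma of the first paragraph is essentially the topological criterion promised in the introduction, so I would either invoke that criterion or retain the short self-contained argument given above.
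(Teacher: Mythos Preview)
There is a genuine gap: the statement in question is the paper's \emph{Conjecture}, which assumes only that $f\in\B$ and does \emph{not} assume the existence of a finite logarithmic asymptotic value. The paper explicitly leaves this open and has no proof of it. Your argument, however, introduces ``the finite logarithmic asymptotic value $\alpha$'' without justification and then splits into cases on whether $\alpha\in J(f)$ or $\alpha\in F(f)$. This extra hypothesis is not available: not every $f\in\B$ has a finite logarithmic asymptotic value. For example $f(z)=\cos z$ lies in $\mathcal{S}\subset\B$, yet its only singular values are the critical values $\pm1$ and it has no finite asymptotic values whatsoever. Your argument therefore establishes at most Theorem~\ref{theo:classb}, not the Conjecture.

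Granting the extra hypothesis, what you have written is essentially the paper's proof of Theorem~\ref{theo:classb} via Theorem~\ref{theo:noloop}. The case $\alpha\in J(f)$ is identical, routed through Theorem~\ref{theo:BUhat}. In the case $\alpha\in F(f)$ the paper instead notes that $\alpha\in BO(f)\cup BU(f)$ and invokes Theorem~\ref{theo:BOhat} (the tract over a small disc about $\alpha$ supplying the required unbounded Fatou component), whereas you lift a small circle about $\alpha$ to an unbounded curve $L\subset F(f)$ and use $I(f)\subset J(f)$ for class~$\B$. Both approaches produce an unbounded connected set disjoint from $I(f)$, so this variation is cosmetic rather than substantive.
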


Our final result is the following, which gives a simple topological characterisation of an $I(f)$ {\spw}, and also an $A(f)$ {\spw}, for a {\tef} $f$. Here $A(f)$ is the so-called \emph{fast escaping set}, which was introduced in \cite{MR1684251}, and can be defined, see \cite{Rippon01102012}, by;
\begin{equation}
\label{Adef}
A(f) := \{z \in \C : \text{there exists } \ell \in\N \text{ such that } |f^{n+\ell}(z)| \geq M^n(R,f), \text{ for } n \in\N \}.
\end{equation}
Here the \emph{maximum modulus function} is defined by $M(r,f) := \max_{|z|=r} |f(z)|,$ for $r \geq 0.$ We write $M^n(r,f)$ to denote repeated iteration of $M(r,f)$ with respect to the variable $r$. In \eqref{Adef}, we assume that $R > 0$ is sufficiently large that $M^n(R,f)\rightarrow\infty$ as $n\rightarrow\infty$. Finally, we say that a set $E \subset \C$ \emph{separates a point $z \in \C$ from infinity} if there is a bounded open set $U$ such that $z \in U$ and $\partial U \subset E$.
\begin{theorem}
\label{theo:spwcriterion}
Suppose that $f$ is a {\tef}. Then $I(f)$ (resp. $A(f)$) is a {\spw} if and only if it separates some point of $J(f)$ from infinity.  If $f$ has no multiply connected Fatou components, then $J(f)$ is a {\spw} if and only if it separates some point of $J(f)$ from infinity.
\end{theorem}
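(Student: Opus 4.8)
Throughout write $E$ for whichever of the three sets is under discussion, and recall that $I(f)$, $A(f)$ and $J(f)$ are all completely invariant, so in particular $f(E)\subset E$. The ``only if'' direction is immediate in every case: if $E$ is a \spw\ with associated domains $(G_n)_{n\in\N}$, then since $\bigcup_n G_n=\C$ and $J(f)$ is non-empty, we may choose $z_0\in J(f)$ and an index $n$ with $z_0\in G_n$; as $G_n$ is a bounded open set with $\partial G_n\subset E$, the set $E$ separates $z_0$ from infinity.

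For the reverse implication the key is a blow-up argument that promotes one separating loop into loops around arbitrarily large discs. Suppose then that $E$ separates some $z_0\in J(f)$ from infinity, witnessed by a bounded open set $U\ni z_0$ with $\partial U\subset E$. Filling in the bounded complementary components of $\partial U$ and keeping the piece containing $z_0$, I first reduce to the case of a bounded \emph{simply connected} domain $G_0\ni z_0$ with $\partial G_0\subset E$. Now I apply the iterates. Since $f^n$ is a non-constant, hence open, holomorphic map and $G_0$ is bounded, $\Omega_n:=f^n(G_0)$ is a bounded open set, and a standard argument (any boundary point of $\Omega_n$ is a limit of $f^n$ along a sequence in $G_0$ converging, by boundedness, to a point of $\partial G_0$) shows that $\partial\Omega_n\subset f^n(\partial G_0)$. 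Complete invariance gives $f^n(\partial G_0)\subset E$, so $\partial\Omega_n\subset E$ for every $n$. Finally, because $z_0\in J(f)$, the blow-up property of the Julia set yields, for each $R>0$, an index $n$ with $\Omega_n\supset\overline{B(0,R)}$ (away from the at most one exceptional value, which is harmless below). Thus $E$ separates every point of an arbitrarily large disc from infinity.

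To assemble a \spw, fix $R_n\uparrow\infty$ and, for each $n$, select $\Omega_{m_n}\supset\overline{B(0,R_n)}$ as above; let $\hat G_n$ be the bounded simply connected domain obtained by filling the bounded complementary components of the component of $\Omega_{m_n}$ that contains $B(0,R_n)$. Then $\partial\hat G_n\subset\partial\Omega_{m_n}\subset E$ and $B(0,R_n)\subset\hat G_n$ (the filling also absorbs the exceptional value, so it too lies in $\hat G_n$ for large $n$). After passing to a subsequence and enlarging radii so that $\overline{\hat G_n}\subset B(0,R_{n+1})\subset\hat G_{n+1}$, the domains $\hat G_n$ are nested, exhaust $\C$, and have boundaries in $E$. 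A consequence worth recording is that $\C\setminus E$ then has no unbounded component: any unbounded connected subset of $\C\setminus E$ would have to cross some $\partial\hat G_n\subset E$, which is impossible. At this point every clause in the definition of a \spw\ is verified except the connectedness of $E$.

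The connectedness of $E$ is where I expect the real work to lie, since the nested loops $\partial\hat G_n$ by themselves need not be joined inside $E$. For the statement about $J(f)$ there is nothing more to do: the hypothesis that $f$ has no multiply connected Fatou components is precisely the condition under which $J(f)$ is connected, via the standard equivalence between connectedness of the Julia set and simple connectedness of all Fatou components. For $I(f)$ and $A(f)$ one knows that $\Ihat(f)$ and $A(f)\cup\{\infty\}$ are connected, and the previous paragraph shows $\C\setminus E$ has only bounded complementary components; the remaining point is that $\infty$ is not a cut point of $E\cup\{\infty\}$, so that deleting it leaves $E$ connected. I would settle this using complete invariance together with the structure theory of the fast-escaping set, in which the \spw\ property of $I(f)$, of $A(f)$ and of the closed invariant levels $A_R(f)$ coincide; transferring the loop structure to some $A_R(f)$, whose connectedness in the \spw\ case comes from its description as an increasing union of connected preimage levels, yields the connectedness of $E$. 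This transfer, rather than the geometric construction above, is the main obstacle.
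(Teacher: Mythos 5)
Your geometric half is exactly the paper's argument: reduce to a bounded simply connected domain $G_0$ with $\partial G_0\subset E$, push forward, use $\partial f^n(G_0)\subset f^n(\partial G_0)\subset E$ together with the blowing-up property (Lemma~\ref{lemm:blowup}) to obtain loops in $E$ surrounding arbitrarily large discs. That part is correct. The genuine gap is the one you flag yourself: the connectedness of $E$, and neither of the routes you sketch for it works. For $J(f)$, the ``standard equivalence'' you invoke --- that absence of multiply connected Fatou components is equivalent to connectedness of $J(f)$ --- is a fact about rational maps and is \emph{false} for {\tef}s: for $f(z)=\lambda e^z$ with $0<\lambda<1/e$, every Fatou component is simply connected, yet $J(f)$ is a Cantor bouquet of pairwise disjoint hairs, hence disconnected (only $J(f)\cup\{\infty\}$ is connected, which is exactly where the rational-map argument breaks down). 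What is true, and what the paper uses, is Kisaka's theorem \cite{MR1609471}: under that hypothesis every component of $J(f)$ is \emph{unbounded}. With this, connectedness does follow: any unbounded component containing a point of $\hat G_n$ must meet $\partial\hat G_n\subset J(f)$, hence contains that loop, and so all components containing points inside the loops coincide.

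For $I(f)$ and $A(f)$ your plan --- transfer the loops to a level set $A_R(f)$ and use the structure of $A_R(f)$ {\spws} --- is unavailable: the known implications run the other way (if $A_R(f)$ is a {\spw}, then so are $A(f)$ and $I(f)$), and whether loops in $I(f)$ force $A_R(f)$ to be a {\spw} is not known; in any case you do not carry out the transfer, calling it ``the main obstacle''. The paper's resolution is much lighter and uniform in all three cases: $I(f)$ has an unbounded component \cite[Theorem 1.1]{Rippon01102012} (likewise $A(f)$, and $J(f)$ via Kisaka). An unbounded connected subset of $E$ eventually has points inside and outside each $\hat G_n$, so it meets every loop $\partial\hat G_n\subset E$; hence the union of that component with the loops is connected, i.e.\ $E$ \emph{contains} a {\spw}. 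The remark of Rippon and Stallard \cite[p.807]{Rippon01102012} then upgrades ``$I(f)$ contains a {\spw}'' to ``$I(f)$ is a {\spw}'', and similarly for $A(f)$. This unbounded-component step is the ingredient your proposal is missing; without it the proof is incomplete.
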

\begin{remark}\normalfont
It is known that if $f$ is a {\tef}, then $I(f)$ contains an unbounded component; see \cite[Theorem 1.1]{Rippon01102012}. This implies that neither $BO(f)$ nor $BU(f)$ can be a {\spw}.
\end{remark}
\subsection*{Structure of the paper}
The structure of this paper is as follows. First, in Section~\ref{S:prelim} we gather some preliminary results. Next, in Section~\ref{S:nonB} we prove Theorem~\ref{theo:BOhat} and Theorem~\ref{theo:BUhat}, and then use these results to prove Theorem~\ref{theo:example}. Finally, in Section~\ref{S:B} we prove Theorem~\ref{theo:classb} and Theorem~\ref{theo:spwcriterion}.

%
%
%
\section{Preliminary results}
\label{S:prelim}
We use the following, which is known as the ``blowing-up'' property of the Julia set; see, for example, \cite[Lemma 2.2]{MR1216719}. Here an exceptional point is a point with finite backward orbit; there is at most one such point.
\begin{lemma}
\label{lemm:blowup}
Suppose that $f$ is a {\tef}, and $V$ is an open set that meets $J(f)$. If $K$ is a compact set that does not contain an exceptional point, then there exists $n_0 \in \N$ such that $f^n(V) \supset K$, for $n \geq n_0$.
\end{lemma}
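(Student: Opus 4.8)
The plan is to prove the statement in two stages: first a ``weak'' blowing-up property, namely that $\bigcup_{n\ge 0} f^n(V)$ omits at most one point of $\C$ and that any omitted point is exceptional; and then an upgrade to the uniform conclusion that $f^n(V) \supseteq K$ holds simultaneously for all sufficiently large $n$.

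For the weak stage I would argue by normality and Montel's theorem. Since $V$ meets $J(f)$, the family of iterates $\{f^n\}$ is not normal on $V$, for otherwise every point of $V$ would lie in $F(f)$. The set of values taken by no $f^n$ on $V$ is exactly $\C \setminus \bigcup_{n\ge 0} f^n(V)$; as each $f^n$ is entire and hence never equal to $\infty$, Montel's theorem forces this set to contain at most one point, since a family omitting two finite values would be normal. Suppose now that a point $a$ is omitted. If $w$ lies in the backward orbit of $a$, say $f^k(w)=a$, then $w \notin \bigcup_n f^n(V)$, because $w=f^m(z)$ with $z\in V$ would give $f^{m+k}(z)=a$. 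Hence the backward orbit of $a$ is contained in the one-point set $\C\setminus\bigcup_n f^n(V)$, so it equals $\{a\}$ and $a$ is exceptional. Writing $E$ for the exceptional set, this shows $\bigcup_{n\ge 0} f^n(V) \supseteq \C\setminus E$; and since $K$ contains no exceptional point, so that $K\subseteq\C\setminus E$, we obtain $K \subseteq \bigcup_{n\ge 0} f^n(V)$.

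For the upgrade I would use the density of repelling periodic points in $J(f)$ to pass from ``each point of $K$ is covered at some time'' to ``all of $K$ is covered at every large time''. Choose a repelling periodic point $\beta\in V$, of period $p$, and set $g=f^p$, so that $\beta$ is a repelling fixed point of $g$ and $J(g)=J(f)$. By the local expansion at a repelling fixed point there is a disk $W\subseteq V$ about $\beta$ with $\overline{W}\subseteq g(W)$, so the sets $g^k(W)$ increase with $k$. To handle the residues modulo $p$, fix $r\in\{0,\dots,p-1\}$ and consider $f^r(W)$: this set is open, meets $J(f)$ because it contains $f^r(\beta)$, and satisfies $f^r(W)\subseteq g\bigl(f^r(W)\bigr)$ since $g$ commutes with $f^r$. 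Applying the weak stage to the transcendental entire function $g$ and the set $f^r(W)$, and using the standard fact that the exceptional set is unchanged under iteration, $E(g)=E$, gives $\bigcup_{k\ge 0} g^k\bigl(f^r(W)\bigr)\supseteq \C\setminus E\supseteq K$. As these sets increase and $K$ is compact, there is $k_r$ with $K\subseteq g^k\bigl(f^r(W)\bigr)=f^{pk+r}(W)$ for all $k\ge k_r$. Taking $n_0=p\bigl(1+\max_r k_r\bigr)$, every $n\ge n_0$ can be written $n=pk+r$ with $0\le r<p$ and $k\ge k_r$, whence $f^n(V)\supseteq f^n(W)\supseteq K$, as required.

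I expect the main obstacle to be exactly this uniformity upgrade. The weak stage only guarantees that each point of $K$ is hit at some, possibly point-dependent, time, and manufacturing a single threshold $n_0$ valid for all of $K$ and all later times requires the expanding-disk construction at a repelling periodic point together with the residue-class bookkeeping modulo the period $p$. Two standard ingredients used there---that repelling periodic points are dense in $J(f)$, and that the exceptional set satisfies $E(f^p)=E(f)$---carry the argument across the residues; the normality argument of the weak stage, by contrast, is routine once Montel's theorem is invoked.
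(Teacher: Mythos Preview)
The paper does not actually prove this lemma; it is stated as a known result with a citation (``see, for example, \cite[Lemma~2.2]{MR1216719}''). Your argument is correct and is essentially the standard proof of the blowing-up property: Montel's theorem gives that $\bigcup_n f^n(V)$ omits at most one point, which must be exceptional, and then the density of repelling periodic points in $J(f)$ together with the local expansion at such a point upgrades this to the uniform statement for all large $n$. The bookkeeping with residues modulo the period $p$, and the invocation of $E(f^p)=E(f)$, are handled correctly.

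One small remark on exposition: the fact $E(f^p)=E(f)$ that you call ``standard'' is true but not entirely trivial for transcendental entire functions (it ultimately rests on the fact that at most one finite value has a finite preimage under a transcendental entire map); since you flag it explicitly, this is fine.
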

We also require a result on wandering domains. If $f$ is a {\tef}, and $U$ is a Fatou component of $f$, then we say that $U$ is \emph{preperiodic} if there exist $n, m \in \N$ with $n \ne m$ and $f^n(U) = f^m(U)$. If this is not the case, then we say that $U$ is \emph{wandering}. We use the following \cite[Theorem 1.5]{OsSix} which, roughly speaking, says that most points on the boundary of a wandering domain have the same behaviour under iteration as the domain itself. Here, for a \tef\ $f$, the \emph{$\omega$-limit set} $\omega(z,f)$ of a point $z\in\C$ is the set of accumulation points of its orbit in $\widehat{\C}$.  For a wandering domain $U$ of $f$, it follows by normality that $\omega(z_1,f) = \omega(z_2,f)$ for $z_1, z_2 \in U$, so in this case we can write $\omega(U,f)$ without ambiguity.
\begin{lemma}
\label{lemm:wandering}
Suppose that $f$ is a {\tef} and that $U$ is a wandering domain of $f$. Then the set $\{ z \in \partial U : \omega(z,f) \ne \omega(U, f) \}$
has harmonic measure zero relative to $U$.
\end{lemma}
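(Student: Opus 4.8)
The plan is to reduce the statement to a one-dimensional problem on the unit disc and then to transfer the (already known) behaviour of interior orbits out to almost every boundary point along radial segments. Assume first that $U$ is simply connected; since $\partial U\subset J(f)\neq\emptyset$ we have $U\subsetneq\C$, so there is a Riemann map $\phi\colon\mathbb{D}\to U$, normalised by $\phi(0)=z_0\in U$. Recall that harmonic measures relative to $U$ based at different points are mutually absolutely continuous, so ``harmonic measure zero'' is unambiguous, and that the harmonic measure based at $z_0$ is the push-forward, under the radial boundary values of $\phi$, of normalised Lebesgue measure on $\partial\mathbb{D}$. Thus it suffices to prove that $\omega(\zeta(\theta),f)=\omega(U,f)$ for Lebesgue-almost every $\theta$, where $\zeta(\theta)\in\partial U$ denotes the radial limit $\lim_{r\to1}\phi(re^{i\theta})$, which exists and is finite for almost every $\theta$ by Fatou's theorem (the set where it equals $\infty$, if $U$ is unbounded, being absorbed into the exceptional handling below). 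Write $\Omega:=\omega(U,f)$; by the remark preceding Lemma~\ref{lemm:wandering} this is well defined and, for every $w\in\mathbb{D}$, the orbit of the interior point $\phi(w)$ has accumulation set exactly $\Omega$.

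The first observation is that, because each iterate $f^n$ is entire and hence continuous at $\zeta(\theta)$, the function $F_n:=f^n\circ\phi$ has radial limit $\lim_{r\to1}F_n(re^{i\theta})=f^n(\zeta(\theta))$ at $e^{i\theta}$ whenever $\zeta(\theta)$ exists and is finite. In other words, for almost every $\theta$ the orbit of the boundary point $\zeta(\theta)$ is precisely the sequence of radial limits of the functions $F_n$. The goal therefore becomes: for almost every $\theta$, the accumulation set of $(f^n(\zeta(\theta)))_{n\ge0}$ equals $\Omega$.

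The engine of the proof is to compare this boundary orbit with the genuinely interior orbit of $\phi(\tfrac12 e^{i\theta})$ along the same radius. Their spherical distance at step $n$ is at most the spherical length $\lambda_n(\theta):=\int_{1/2}^{1}F_n^{\#}(re^{i\theta})\,dr$ of the radial image, where $F_n^{\#}$ is the spherical derivative. Here I use that $U$ is wandering: the images $U_n:=f^n(U)$ are pairwise disjoint, so their spherical areas satisfy $\sum_{n\ge0}\mathrm{area}_{\mathrm{sph}}(U_n)\le4\pi$. Writing each area as $\int_{\mathbb{D}}F_n^{\#}(w)^2\,dA(w)$ and summing, Fubini's theorem yields $\sum_{n}\int_{1/2}^{1}F_n^{\#}(re^{i\theta})^2\,r\,dr<\infty$ for almost every $\theta$, whence $\int_{1/2}^1 F_n^{\#}(re^{i\theta})^2\,r\,dr\to0$ and, by Cauchy--Schwarz, $\lambda_n(\theta)\to0$ as $n\to\infty$ for almost every $\theta$. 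Since two sequences in the compact sphere $\widehat{\C}$ whose mutual spherical distance tends to $0$ have identical accumulation sets, it follows that $\omega(\zeta(\theta),f)=\omega(\phi(\tfrac12 e^{i\theta}),f)=\Omega$ for almost every $\theta$, which is the assertion.

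The main obstacle is hidden in the area identity: $\int_{\mathbb{D}}F_n^{\#}{}^{2}\,dA$ is the spherical area of $U_n$ counted with multiplicity, and the clean bound by $4\pi$ is only immediate when $f^n|_U$ is injective (or of bounded valence), whereas critical points accumulating along the orbit can in principle force the multiplicity of $f^n|_U$ to grow, and neither $U$ nor the $U_n$ need be simply connected. Overcoming this --- making the transfer from interior to boundary orbits legitimate in the presence of high valence and non-trivial topology --- is where the real work lies, and is what I expect to be delicate. I would handle it by arguing directly with harmonic measure rather than the conformal area: for each spherical ball $B$ with $\overline{B}\cap\Omega=\emptyset$ one estimates $\sum_n \omega\bigl(z_0,\{z\in\partial U:f^n(z)\in B\},U\bigr)$ and shows it to be finite, using Beurling-type harmonic-measure estimates together with the set-wise disjointness (hence vanishing areas) of the $U_n$; a Borel--Cantelli argument and a countable exhaustion of $\widehat{\C}\setminus\Omega$ by such balls then give $\omega(\zeta(\theta),f)\subseteq\Omega$ almost everywhere, the reverse inclusion being automatic from the interior. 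Finally, the multiply connected case falls outside this scheme but is easier: such wandering domains lie in $A(f)\subset I(f)$, so that $\Omega=\{\infty\}$, and the corresponding boundary statement can be treated separately.
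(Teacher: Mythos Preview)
The paper does not prove this lemma: it is stated as a preliminary result and attributed to \cite[Theorem~1.5]{OsSix}. There is therefore no in-paper proof to compare your attempt against.

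On its own merits, your outline follows the natural strategy --- pull back to the disc via a Riemann map, compare the boundary orbit $f^n(\zeta(\theta))$ with the interior orbit $f^n(\phi(\tfrac12 e^{i\theta}))$ along the same radius, and show their spherical distance tends to zero almost everywhere --- and you correctly isolate the real difficulty: the bound $\sum_n\int_{\mathbb D}F_n^{\#2}\,dA\le 4\pi$ is only immediate when each $f^n|_U$ is univalent, which is not guaranteed. Your proposed workaround via direct harmonic-measure estimates and Borel--Cantelli is in the right spirit (and close to what the cited reference and the related work of Rippon and Stallard on boundaries of escaping Fatou components actually do), but as written it remains a sketch rather than a proof. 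One specific point needs care: the inclusion $\Omega\subseteq\omega(\zeta(\theta),f)$ is \emph{not} ``automatic from the interior''. Locally uniform convergence $f^{n_k}\to c$ on $U$ says nothing directly about $f^{n_k}(\zeta(\theta))$, since $\zeta(\theta)\in\partial U$; in the radial-length approach this inclusion falls out once $\lambda_n(\theta)\to0$, but in the Borel--Cantelli scheme it requires its own argument. Likewise, in the multiply connected case you are right that $U\subset A(f)$ and $\Omega=\{\infty\}$, but the conclusion that almost every boundary point also escapes is itself a nontrivial boundary statement that still has to be supplied.
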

We need the following, which is \cite[Lemma 3.1]{Sixsmithmax}.
\begin{lemma}
\label{lemm:RS}
Suppose that $(E_n)_{n\in\N}$ is a sequence of compact sets and $(m_n)_{n\in\N}$ is a sequence of integers. Suppose also that $f$ is a {\tef} such that $E_{n+1} \subset f^{m_n}(E_n )$, for $n\in\N$. Set $p_n = \sum_{k=1}^n m_k$, for $n\in\N$. Then there exists $\zeta\in E_1$ such that 
\begin{equation}
\label{feq}
f^{p_n}(\zeta) \in E_{n+1}, \qfor n\in\N.
\end{equation}
If, in addition, $E_n \cap J(f) \ne \emptyset$, for $n\in\N$, then there exists $\zeta \in E_1 \cap J(f)$ such that (\ref{feq}) holds.
\end{lemma}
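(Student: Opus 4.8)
The plan is to prove this by a backward (pullback) construction combined with the finite intersection property for nested nonempty compact sets. Throughout I assume each $E_n$ is nonempty; this is implicit in the statement and is necessary for the conclusion, since if some $E_n$ were empty the hypothesis would force $E_{n+1} \subset f^{m_n}(E_n) = \emptyset$, and then no $\zeta$ could satisfy $f^{p_n}(\zeta) \in E_{n+1}$.

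First I would fix $N \in \N$ and construct, by downward induction, a nonempty compact set $D_1 \subset E_1$ all of whose points follow the prescribed itinerary up to step $N$. Set $D_{N+1} := E_{N+1}$, and for $k = N, N-1, \ldots, 1$ define
\[
D_k := \{ z \in E_k : f^{m_k}(z) \in D_{k+1} \} = E_k \cap f^{-m_k}(D_{k+1}).
\]
Since $f^{m_k}$ is continuous and $D_{k+1}$ is compact, each $D_k$ is a closed subset of the compact set $E_k$, hence compact. The key claim, proved by downward induction on $k$, is that each $D_k$ is nonempty with $f^{m_k}(D_k) = D_{k+1}$. Indeed, assuming $D_{k+1} \ne \emptyset$ and noting that $D_{k+1} \subset E_{k+1} \subset f^{m_k}(E_k)$ by construction, the set identity $g(A \cap g^{-1}(B)) = g(A) \cap B$ gives
\[
f^{m_k}(D_k) = f^{m_k}(E_k) \cap D_{k+1} = D_{k+1} \ne \emptyset,
\]
so in particular $D_k \ne \emptyset$. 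By chaining the containments, any $z \in D_1$ satisfies $f^{p_k}(z) \in D_{k+1} \subset E_{k+1}$ for $1 \le k \le N$.

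Writing $F_N$ for the set $D_1$ produced for a given $N$, the $F_N$ form a decreasing sequence $F_{N+1} \subset F_N$ of nonempty compact subsets of $E_1$, since an itinerary of length $N+1$ restricts to one of length $N$. By Cantor's intersection theorem $\bigcap_{N \in \N} F_N \ne \emptyset$, and any $\zeta$ in this intersection satisfies $f^{p_n}(\zeta) \in E_{n+1}$ for all $n \in \N$, which proves the first assertion.

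For the second assertion I would run the same argument with $E_n$ replaced by $E_n' := E_n \cap J(f)$, which are nonempty compact sets by hypothesis. The only point needing verification is that the containment hypothesis is inherited, i.e. that $E_{n+1}' \subset f^{m_n}(E_n')$, and this is exactly where complete invariance of the Julia set enters: given $w \in E_{n+1} \cap J(f)$, the first part provides a preimage $z \in E_n$ with $f^{m_n}(z) = w$, and since $f^{-1}(J(f)) = J(f)$ the point $z$ also lies in $J(f)$, so $z \in E_n'$. Applying the first part to $(E_n')$ then yields $\zeta \in E_1' = E_1 \cap J(f)$ as required. I expect the only genuinely delicate step to be this inheritance of the hypothesis under intersection with $J(f)$; the remainder is a routine compactness argument, with the surjectivity identity $f^{m_k}(D_k) = D_{k+1}$ — which keeps the pullback sets nonempty — forming the technical heart of the first part.
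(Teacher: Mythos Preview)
Your argument is correct: the backward pullback construction together with the finite intersection property is the standard way to prove this ``itinerary'' lemma, and your verification that the inheritance $E_{n+1}\cap J(f)\subset f^{m_n}(E_n\cap J(f))$ follows from complete invariance of $J(f)$ is exactly right. Note, however, that the paper does not prove this lemma at all --- it is quoted verbatim as \cite[Lemma~3.1]{Sixsmithmax} --- so there is no in-paper proof to compare against; your write-up supplies a self-contained proof where the paper simply cites one.
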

To prove Theorem~\ref{theo:BUhat} we require the following, which seems to be new.
\begin{lemma}
\label{lemm:tract}
Let $f$ be a {\tef} with a finite logarithmic asymptotic value $\alpha\in\C$. Let $r>0$ be sufficiently small that $f$ is a universal covering from a component $T$ of $f^{-1}(B(\alpha, r))$ to $B(\alpha, r)\setminus\{\alpha\}$. Then there exist $R>0$, and a component $V$ of $T \cap B(0, R)$ such that the following holds. Suppose that $\gamma \subset \C\setminus \overline{B(0, R)}$ is a continuum such that $V$ lies in a bounded component of $T\setminus \gamma$. Then the complementary component of $f(\gamma \cap T)$ containing $\alpha$ lies in $B(\alpha,r)$.
\end{lemma}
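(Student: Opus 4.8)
The plan is to linearise the covering by a conformal change of variable, to reduce the stated conclusion to a statement about escape to infinity, and then to reach a contradiction by lifting a path through the covering. Since $f\colon T\to B(\alpha,r)\setminus\{\alpha\}$ is a universal covering, $T$ is simply connected, and comparison with the explicit universal covering $w\mapsto\alpha+re^{-w}$ of $B(\alpha,r)\setminus\{\alpha\}$ by the right half-plane $H:=\{w:\operatorname{Re}w>0\}$ gives a conformal isomorphism $\tau\colon T\to H$ with $f(z)=\alpha+re^{-\tau(z)}$ for $z\in T$. Then $|f(z)-\alpha|=re^{-\operatorname{Re}\tau(z)}$ and $\arg(f(z)-\alpha)=-\operatorname{Im}\tau(z)$, so approaching the asymptotic value (that is, $f\to\alpha$) corresponds to $\operatorname{Re}\tau\to+\infty$, while $\partial T$ corresponds to $\operatorname{Re}\tau\to0$.

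To choose $V$ and $R$, I would fix some $s>0$ and consider the arc $\tau^{-1}(\{s+it:t\in[0,2\pi]\})$, which is a compact subset of $T$. Taking $R$ large enough that this arc lies in $B(0,R)$, I let $V$ be the component of $T\cap B(0,R)$ containing it. The purpose of this choice is that $f(V)$ then contains the entire circle $C:=\{w:|w-\alpha|=re^{-s}\}$ about $\alpha$, because $e^{-it}$ sweeps out every argument as $t$ runs over $[0,2\pi]$. This single property of $V$ is all that the argument needs.

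Now suppose $\gamma\subset\C\setminus\overline{B(0,R)}$ is a continuum with $V$ contained in a bounded component $D$ of $T\setminus\gamma$. Since $f(\gamma\cap T)\subset B(\alpha,r)$ and $f\neq\alpha$ on $T$, the complementary component of $f(\gamma\cap T)$ containing $\alpha$ fails to lie in $B(\alpha,r)$ exactly when there is a path $\beta$ from $\alpha$ to $\partial B(\alpha,r)$ avoiding $f(\gamma\cap T)$; this is the reduction I would make first. Assuming such a $\beta$ exists, it must meet $C$ at some point $w^{*}\notin f(\gamma\cap T)$, and since $C\subset f(V)$ there is $v^{*}\in V$ with $f(v^{*})=w^{*}$. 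Lifting the sub-path of $\beta$ running from $w^{*}$ into the puncture $\alpha$ through the covering $f|_{T}$, starting at $v^{*}$, produces a path $\tilde\beta\subset T$ that begins at $v^{*}\in D$, avoids $\gamma$ (because $\beta$ avoids $f(\gamma\cap T)$ and $\tilde\beta\subset T$), and along which $\operatorname{Re}\tau\to+\infty$.

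The hard part, and the crux of the whole argument, is to show that $\operatorname{Re}\tau\to+\infty$ along $\tilde\beta$ forces $\tilde\beta$ to escape to infinity in $\C$; granting this, $D$ contains the unbounded connected set $\tilde\beta$, contradicting the boundedness of $D$, and the lemma follows. Concretely I must prove that $\operatorname{Re}\tau(z)\to+\infty$ forces $z\to\infty$, equivalently that $|f-\alpha|$ is bounded below on $T\cap B(0,R')$ for every $R'>0$. I would establish this by compactness: if $z_{n}\in T\cap B(0,R')$ with $f(z_{n})\to\alpha$, then a subsequence converges to some $z^{*}\in\overline{T}\cap\overline{B(0,R')}$ with $f(z^{*})=\alpha$, which is impossible, since $f\neq\alpha$ on $T$ while every boundary point of $T$ satisfies $|f-\alpha|=r$. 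This is the one point where the genuine Euclidean geometry of the tract, rather than merely the abstract covering structure encoded by $\tau$, must be brought in, and it is where the argument needs the most care.
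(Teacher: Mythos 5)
Your proposal is correct and follows essentially the same route as the paper's proof: both choose $V$ so that $f(V)$ contains a closed curve around $\alpha$ (your circle $C$, the paper's annulus $A$), assume the conclusion fails, extract a path in the complement of $f(\gamma\cap T)$ joining that curve to the puncture $\alpha$, and lift it through the covering $f|_T$ to obtain an unbounded connected subset of $T\setminus\gamma$ meeting $V$, contradicting the boundedness of the component of $T\setminus\gamma$ containing $V$. The differences are presentational: you spell out the escape-to-infinity step (that $|f-\alpha|$ is bounded below on $T\cap B(0,R')$), which the paper leaves implicit in the phrase ``joins a point in $V$ to a point in an unbounded component of $T\setminus\gamma$'', while your ``exactly when'' reduction tacitly assumes the complementary component is path-connected --- the same point the paper glosses with ``Since $S$ is a domain'' --- and both are repaired identically by working with the component of $B(\alpha,r)\setminus f(\gamma\cap T)$ containing $\alpha$, which is open because $f(\gamma\cap T)$ is relatively closed in $B(\alpha,r)$.
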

\begin{proof}

Let $r$ and $T$ be as in the statement of the lemma. Choose $R>0$ sufficiently large that there is a component $V$ of $T \cap B(0, R)$, such that $f(V)$ contains an annulus of the form $A := \{ z \in \C : r-\delta < |z-\alpha| < r \}$, for some $\delta \in (0, r)$.

Now, suppose that $\gamma \subset \C\setminus \overline{B(0, R)}$ is a continuum such that $V$ lies in a bounded component of $T\setminus \gamma$. Let $S$ be the component of $\C \setminus f(\gamma\cap T)$ that contains $\alpha$. We need to show that $S \subset B(\alpha, r)$. Suppose, therefore, that this is not the case. Then $S$ contains both $\alpha$ and a point $\zeta \in A$. Since $S$ is a domain, we can let $\Gamma$ be a curve in $S$ that joins $\alpha$ and $\zeta$. Without loss of generality (replacing $\zeta$ with some other point of $\Gamma\cap A$ if necessary), we can assume that $\Gamma \subset B(\alpha, r)$.

Let $\zeta' \in V$ be a preimage of $\zeta$, and let $\Gamma'$ be the component of $f^{-1}(\Gamma)$ that contains $\zeta'$. Then $\Gamma' \subset T$ joins a point in $V$ to a point in an unbounded component of $T \setminus \gamma$, which is a contradiction.
\end{proof}
%
%
%
\section{Results on $\Ihat(f), \BOhat(f)$, and $\BUhat(f)$}
\label{S:nonB}
Theorem~\ref{theo:BOhat} is a consequence of the following lemma.
\begin{lemma}
\label{lemm:BOhat}
Suppose that $f$ is a {\tef} with an unbounded Fatou component in $BO(f)$ (resp. $BU(f)$). Suppose that $U$ is a bounded domain that meets $BO(f)$ (resp. $BU(f)$). Then $\partial U$ also meets $BO(f)$ (resp. $BU(f)$).
\end{lemma}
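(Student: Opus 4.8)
The engine of my proof will be the following elementary \emph{image argument}. If $V$ is a bounded domain and $g$ is entire, then $g(V)$ is a bounded open set and $\partial g(V) \subseteq g(\partial V)$, since $g(\overline V)$ is compact, contains $\overline{g(V)}$, and equals $g(V) \cup g(\partial V)$. Consequently, if $C$ is an \emph{unbounded} connected set meeting $g(V)$, then $C \not\subseteq g(V)$, so $C$ meets $\partial g(V) \subseteq g(\partial V)$. Applying this with $g = f^m$, I obtain the key principle: if some forward image $f^m(U)$ meets an unbounded Fatou component $C$ with $C \subseteq BO(f)$ (resp. $BU(f)$), then there is a point $p \in \partial U$ with $f^m(p) \in C$; since a finite initial segment of an orbit does not affect whether it is bounded, escaping, or bungeeing, this forces $p \in BO(f)$ (resp. $BU(f)$), and we are done. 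The whole plan is to arrange this situation, and to treat directly the cases where it cannot be arranged.

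First I would dispose of the case where $U$ meets $J(f)$. Let $W$ be the given unbounded Fatou component, and fix a closed disc $\overline D \subseteq W$ avoiding the (at most one) exceptional point. Since $U$ is open and meets $J(f)$, the blowing-up property (Lemma~\ref{lemm:blowup}) yields an $m$ with $f^m(U) \supseteq \overline D$, so $f^m(U)$ meets $W$; as $W \subseteq BO(f)$ (resp. $BU(f)$) is unbounded, the image argument applies. There remains the case $U \subseteq F(f)$, so that $U$ lies in a single Fatou component $V_0$; recalling that each Fatou component is contained in exactly one of $I(f)$, $BO(f)$, $BU(f)$, we have $V_0 \subseteq BO(f)$ (resp. $BU(f)$). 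A short topological argument shows that if $U \ne V_0$ then $\partial U \cap V_0 \ne \emptyset$: otherwise $\partial U \subseteq \partial V_0$, whence $V_0 = U \sqcup (V_0 \setminus \overline U)$ is a partition of the connected set $V_0$ into relatively open pieces, forcing $V_0 \subseteq \overline U$ and hence $V_0 = U$. Any such boundary point lies in $V_0$ and we are done. This reduces matters to the crucial case $U = V_0$, a \emph{bounded} Fatou component contained in $BO(f)$ (resp. $BU(f)$), with $\partial U = \partial V_0 \subseteq J(f)$.

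The crucial case I would split according to the type of $V_0$. Suppose first $V_0$ is (pre)periodic. Then its forward orbit meets only finitely many Fatou components $C_0, C_1, \dots$, each contained in $BO(f)$ (resp. $BU(f)$), and one checks by continuity (as in the derivation $f(\partial V_0)\subseteq \partial C_1$) that $f^k(\partial V_0) \subseteq \partial C_k$ for all $k$. If all the $C_k$ are bounded, then $\bigcup_k \partial C_k$ is bounded, so every point of $\partial V_0$ has bounded orbit and $\partial V_0 \subseteq BO(f)$ (this subcase gives the conclusion in the $BO(f)$ reading and cannot occur for $BU(f)$, since bounded orbits would contradict $V_0 \subseteq BU(f)$); if instead some $C_k$ is unbounded, the image argument with $C = C_k$ finishes the job. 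The remaining case, which I expect to be the main obstacle, is when $V_0$ is a \emph{wandering} domain: here $U$ meets neither $J(f)$ (so blowing-up is unavailable) nor, possibly, any unbounded Fatou component in its forward orbit (so the image argument may fail), and the dynamics on $\partial U$ is controlled by neither expansion nor invariance.

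For this wandering case I would appeal to the boundary behaviour supplied by Lemma~\ref{lemm:wandering}. Since $V_0 \subseteq BO(f)$ (resp. $BU(f)$), the set $\omega(V_0,f)$ omits $\infty$ (resp. contains $\infty$ together with a finite point), and by that lemma harmonic-measure-almost every $z \in \partial V_0$ satisfies $\omega(z,f) = \omega(V_0,f)$. For such $z$ this forces $\limsup_n |f^n(z)| < \infty$, so $z \in BO(f)$ (resp. forces $\infty \in \omega(z,f)$ and $\liminf_n|f^n(z)|<\infty$, so $z \in BU(f)$). In particular such a boundary point exists, which completes the proof. Thus the only genuinely delicate step is this measure-theoretic treatment of the boundary of a bounded wandering domain; every other case is handled either by the blowing-up property together with the image argument, or by an elementary topological or invariance observation.
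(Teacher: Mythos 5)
Your proof is correct and takes essentially the same route as the paper's: the same case decomposition (a domain meeting $J(f)$ handled by Lemma~\ref{lemm:blowup} together with the hypothesised unbounded component, a proper subdomain of a Fatou component handled by normality, preperiodic components handled directly, and wandering components handled by Lemma~\ref{lemm:wandering}), with your ``image argument'' being exactly the step the paper leaves implicit when it passes from $f^n(U)\cap V\ne\emptyset$ to $f^n(\partial U)\cap V\ne\emptyset$. If anything, your treatment of the preperiodic case is more careful than the paper's ``it is easy to see that $\partial U \subset BO(f)$'', since you also cover the possibility that a Fatou component in the forward orbit of $U$ is unbounded, dispatching it by the same image argument.
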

\begin{proof}
We prove only the case of $BO(f)$. The case of $BU(f)$ is very similar, and is omitted.

Suppose first that $U \subset F(f)$. If $U$ is not itself a Fatou component, then the result follows by normality. Hence we can assume that $U$ is a Fatou component of $f$.

If $U$ is preperiodic, then it is easy to see that $\partial U \subset BO(f)$. Hence we can assume that $U$ is wandering. The conclusion of the lemma then follows by Lemma~\ref{lemm:wandering}.

We can assume, therefore, that $U$ meets $J(f)$. Let $V$ be the unbounded Fatou component in $BO(f)$. It follows by Lemma~\ref{lemm:blowup} that there exists $n \in \N$ such that $f^n(U) \cap V \ne \emptyset$. Hence $f^n(\partial U) \cap V \ne \emptyset$, and the result follows.
\end{proof}
\begin{proof}[Proof of Theorem~\ref{theo:BOhat}]
As in the case of Lemma~\ref{lemm:BOhat} we prove only the case of $BO(f)$. The case of $BU(f)$, which is very similar, is omitted.

Suppose that, with the hypotheses of the theorem, $\BOhat(f)$ was not connected. Then there would be disjoint open sets $H_1, H_2 \subset \widehat{\C}$ such that
\[
\BOhat(f) \subset H_1 \cup H_2 \quad\text{ and }\quad \partial H_i \cap BO(f) \ne \emptyset, \text{ for } i \in \{1, 2\}.
\]
Without loss of generality we can assume that $H_1$ is bounded and meets $BO(f)$. It follows by Lemma~\ref{lemm:BOhat} that $\partial H_1$ meets $BO(f)$, which is a contradiction, completing the proof.
\end{proof}
Theorem~\ref{theo:BUhat} is a consequence of the following, which clearly is analogous to Lemma~\ref{lemm:BOhat}.
\begin{lemma}
\label{lemm:BUhat}
Suppose that $f$ is a {\tef}, and that $f$ has a finite logarithmic asymptotic value in $J(f)$. Suppose that $U$ is a bounded domain that meets $BU(f)$. Then $\partial U$ also meets $BU(f)$.
\end{lemma}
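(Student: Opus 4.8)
The plan is to follow the scheme of Lemma~\ref{lemm:BOhat}, splitting according to whether $U\subset F(f)$ or $U$ meets $J(f)$, and to construct an explicit bungee point on $\partial U$ in the second case. In the first case, since $U$ is connected it lies in a single Fatou component $W$, and as $U$ meets $BU(f)$ the whole of $W$ lies in $BU(f)$: all points of a Fatou component share the same limiting behaviour (for a wandering domain this is the equality $\omega(z_1,f)=\omega(z_2,f)$ recorded before Lemma~\ref{lemm:wandering}, and for a (pre)periodic component it follows from the classification, since attracting, parabolic and Siegel components lie in $BO(f)$ and Baker domains in $I(f)$, so a $BU(f)$-component is necessarily wandering). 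If $U\ne W$ then a standard connectedness argument shows $\partial U$ meets $W\subset BU(f)$, while if $U=W$ then Lemma~\ref{lemm:wandering} provides boundary points $z$ with $\omega(z,f)=\omega(U,f)$; since $\omega(U,f)$ contains both $\infty$ and a finite point (as $U\subset BU(f)$), any such $z$ lies in $BU(f)$.

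Now suppose that $U$ meets $J(f)$, and let $\alpha$, $r$, $T$, $R$, $V$ be as in Lemma~\ref{lemm:tract}. The engine is a ``surrounding'' step: if $W$ is any bounded domain meeting $J(f)$, then Lemma~\ref{lemm:blowup} gives $m$ with $f^m(W)\supset\overline{B(0,\rho)}$ for $\rho$ as large as we please; since $\partial(f^m(W))\subset f^m(\partial W)$ and $f^m(W)$ is bounded, the outer boundary of the component of $f^m(W)$ containing $V$ yields a continuum $\gamma\subset f^m(\partial W)$, lying in $\{|z|\ge\rho\}$, with $V$ in a bounded component of $T\setminus\gamma$. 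Applying Lemma~\ref{lemm:tract} to $\gamma$, the component $S$ of $\C\setminus f(\gamma\cap T)$ containing $\alpha$ satisfies $S\subset B(\alpha,r)$; thus $f(\gamma\cap T)$ is a curve encircling $\alpha$ inside $B(\alpha,r)$, with $\partial S\subset f(\gamma\cap T)$. The crucial point is that $S$ is again a bounded domain meeting $J(f)$, since it contains $\alpha\in J(f)$, so the surrounding step may be re-applied to $W=S$.

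The plan is then to iterate this alternation and feed it to Lemma~\ref{lemm:RS}. Beginning with $W=U$ gives a deep continuum $\gamma_1\subset f^{m_0}(\partial U)$ and a domain $S_1\ni\alpha$ with $\partial S_1\subset f(\gamma_1\cap T)$; applying the surrounding step to $S_1$ gives $\gamma_2\subset f^{k_1}(\partial S_1)$ lying in $\{|z|\ge\rho_2\}$ with $\rho_2$ arbitrarily large, and a domain $S_2\ni\alpha$, and so on, with $\gamma_j\subset\{|z|\ge\rho_j\}$ and $\rho_j\to\infty$. For the sets of Lemma~\ref{lemm:RS} I would take $E_1$ a compact subset of $\partial U$, and thereafter alternate the deep continua (compact subsets of $\gamma_j\cap T$, chosen so that their $f$-image contains the loop $\partial S_j$) with the loops $\partial S_j$; with the corresponding times $m_0$, then $1$ (the single step into $B(\alpha,r)$) and $k_j$ (the surrounding step), the inclusions $E_{n+1}\subset f^{m_n}(E_n)$ all hold. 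Lemma~\ref{lemm:RS} then produces $\zeta\in E_1\subset\partial U$ whose orbit meets each $\gamma_j$, where $|z|\ge\rho_j\to\infty$, and each $\partial S_j\subset\overline{B(\alpha,r)}$; hence $\limsup_n|f^n(\zeta)|=\infty$ while $\liminf_n|f^n(\zeta)|\le|\alpha|+r<\infty$, so that $\zeta\in\partial U\cap BU(f)$, as required.

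I expect the main obstacle to be exactly the interface between $\partial U$, which need not have interior, and the blow-up property of Lemma~\ref{lemm:blowup}, which requires open sets: one cannot blow up directly the one-dimensional curves that appear in the itinerary. The surrounding step is what resolves this, since the blow-up is applied to the two-dimensional regions $S_j$ while the conclusion is drawn about the images $f^{k_j}(\partial S_j)$ of their boundary curves; Lemma~\ref{lemm:tract} is precisely the device that guarantees that a curve running deep into the unbounded tract $T$ returns, after one further application of $f$, as a curve tightly encircling $\alpha$, keeping the returns in $\overline{B(\alpha,r)}$ and hence $\liminf$ finite. The remaining care will be in arranging the deep sets to be compact subsets of $T$ whose images still contain the loops $\partial S_j$, and in checking the closure and compactness details of the inclusions $E_{n+1}\subset f^{m_n}(E_n)$.
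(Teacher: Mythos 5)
Your proposal is correct and follows essentially the same route as the paper's proof: the identical case split into $U\subset F(f)$ (where a $BU$-component is necessarily wandering and Lemma~\ref{lemm:wandering} supplies the boundary point) and $U$ meeting $J(f)$ (where one alternates blow-up of a domain containing $\alpha$, the outer-boundary continuum of its image lying far out, and Lemma~\ref{lemm:tract} to return inside $B(\alpha,r)$, feeding the resulting compact sets into Lemma~\ref{lemm:RS} to get $\zeta\in\partial U$ with $\limsup_n|f^n(\zeta)|=\infty$ and $\liminf_n|f^n(\zeta)|\le|\alpha|+r$). The one slip is the claim that Lemma~\ref{lemm:blowup} gives $f^m(W)\supset\overline{B(0,\rho)}$, which fails if $f$ has an exceptional point (e.g.\ an omitted value) in $B(0,\rho)$; as in the paper one blows up onto $\overline{B(0,\rho)}\setminus W_0$ for a small disc $W_0$ about that point, after which your outer-boundary and surrounding arguments go through unchanged.
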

\begin{proof}
Suppose first that $U \subset F(f)$. If $U$ is not itself a Fatou component, then the result follows by normality. Hence we can assume that $U$ is a Fatou component of $f$. It is known \cite[Theorem 1.1]{OsSix} that $U$ must be wandering. The result then follows by Lemma~\ref{lemm:wandering}.

We can assume, therefore, that $U$ meets $J(f)$. Let $\alpha \in J(f)$ be a finite logarithmic asymptotic value of $f$. Let $r, R > 0$, let $T$ be a component of $f^{-1}(B(\alpha, r))$, and let $V$ be the component of $T \cap B(0, R)$, such that the properties stated in Lemma~\ref{lemm:tract} all hold. Let $(R_n)_{n\in\N}$ be a sequence of real numbers larger than $R$ that tend to infinity. Let $W$ be a bounded open disc containing any exceptional point of $f$. We can assume that $\overline{W} \subset B(0, R_n)$, for $n \in \N$.

We now construct a point in $\partial U \cap BU(f)$. Set $U_1 := U$. By Lemma~\ref{lemm:blowup}, there exists $n_1 \in \N$ such that $\overline{B(0,R_1)}\setminus W \subset f^{n_1}(U_1)$. Hence there is a continuum $E_1 \subset \partial U_1$ such that $V$ lies in a bounded component of $T \setminus f^{n_1}(E_1)$.

Now consider $f(f^{n_1}(E_1) \cap T)$. It follows by an application of Lemma~\ref{lemm:tract} that the complement of $f(f^{n_1}(E_1) \cap T)$ has a simply connected component containing $\alpha$, and lying in $B(\alpha,r)$. Call this component $U_2$. Note that $\partial U_2 \subset f^{n_1 + 1}(E_1)$.

Since $U_2$ meets $J(f)$ -- recall that $\alpha \in J(f)$ -- we can iterate the above construction. We obtain a sequence of integers $(n_k)_{k\in\N}$ and a sequence of continua $(E_k)_{k\in\N}$ such that 
\begin{equation}
\label{eq:theE}
E_{k+1} \subset f^{n_k+1}(E_k), \ f^{n_k}(E_k) \subset \C\setminus \overline{B(0, R_k)}, \text{ and } f^{n_k+1}(E_k) \subset \overline{B(\alpha, r)}, \qfor k \in \N.
\end{equation}

It follows by Lemma~\ref{lemm:RS} that there is a point $\zeta\in E_1$ and a sequence of integers $(p_k)_{k\in\N}$ such that $f^{p_k}(\zeta) \in E_{k+1}$, for $k\in\N$. It follows from \eqref{eq:theE} that $\zeta \in \partial U \cap BU(f)$, as required.
\end{proof} 
Finally in this section, we use these results to prove Theorem~\ref{theo:example}.
\begin{proof}[Proof of Theorem~\ref{theo:example}]
We construct a {\tef} $f$ with a finite logarithmic asymptotic value in $J(f)$, and a second finite logarithmic asymptotic value lying in $F(f) \cap BO(f)$. It is easy to deduce from the second fact that $f$ has an unbounded Fatou component in $BO(f)$. The result then follows by Theorem~\ref{theo:BOhat} and Theorem~\ref{theo:BUhat}.

Consider the family of functions
\[
                    f_{\alpha, \beta}(z) := \frac{2\alpha}{\sqrt{\pi}} \int_0^z e^{-w^2} \ dw + \beta = \alpha \operatorname{erf}(z) + \beta, \qfor \alpha, \beta \in \C, \ \alpha \ne 0.
\]
Here $\operatorname{erf}(z)$ denotes the error function; see \cite[p.297]{standards}.

Clearly $f_{\alpha, \beta}$ has no critical values. It can be seen that $f_{\alpha, \beta}$ has two finite logarithmic asymptotic values, obtained as $z$ tends to infinity along the real axis in the positive and negative directions. It is a calculation to show that these asymptotic values are equal to $\pm \alpha + \beta$.

We choose values for $\alpha$ and $\beta$ so that $\alpha + \beta \in J(f_{\alpha, \beta})$ and $-\alpha + \beta$ lies in a parabolic basin of $f_{\alpha, \beta}$. First we let $c$ be a complex solution to $\operatorname{erf}(z) = 1$. In particular, we set $c = -1.3548101281\ldots + 1.9914668428\ldots i$; see \cite[Table 7.13.2]{NIST:DLMF}. We then let
\[
\alpha = \frac{e^{c^2}\sqrt{\pi}}{2} \quad\text{and}\quad\beta=c-\alpha.
\]
It follows that 
\[
f_{\alpha, \beta}(\alpha + \beta) = f_{\alpha, \beta}(c) = \alpha + \beta, \quad\text{and}\quad f'(\alpha+\beta) = \frac{2\alpha}{\sqrt{\pi}} e^{-c^2} = 1.
\]
Hence $\alpha + \beta$ is a parabolic fixed point and so lies in the Julia set of $f_{\alpha, \beta}$. It can be seen from Figure~\ref{fig:2} that $-\alpha+\beta$ lies in the parabolic basin of this point. This is exactly what we require.
\end{proof}

 \begin{figure}
  \subfloat{\includegraphics[width=1\textwidth]{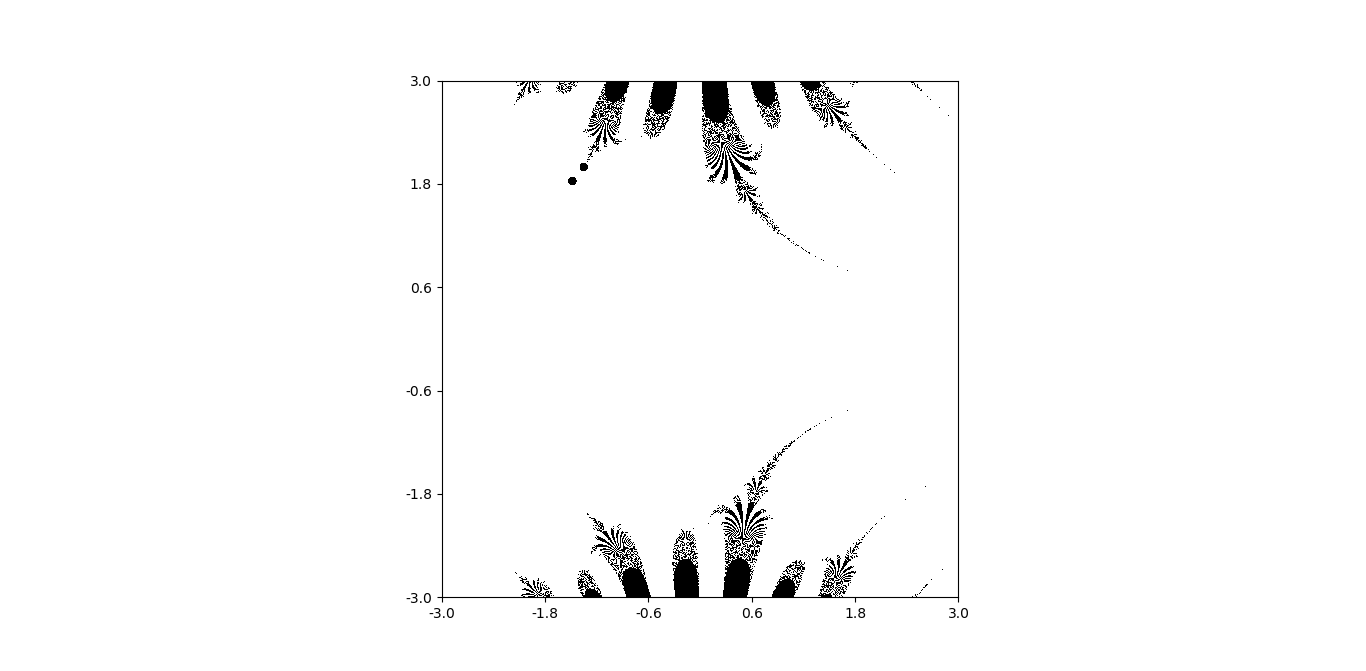}}\hfill
  \subfloat{\includegraphics[width=1\textwidth]{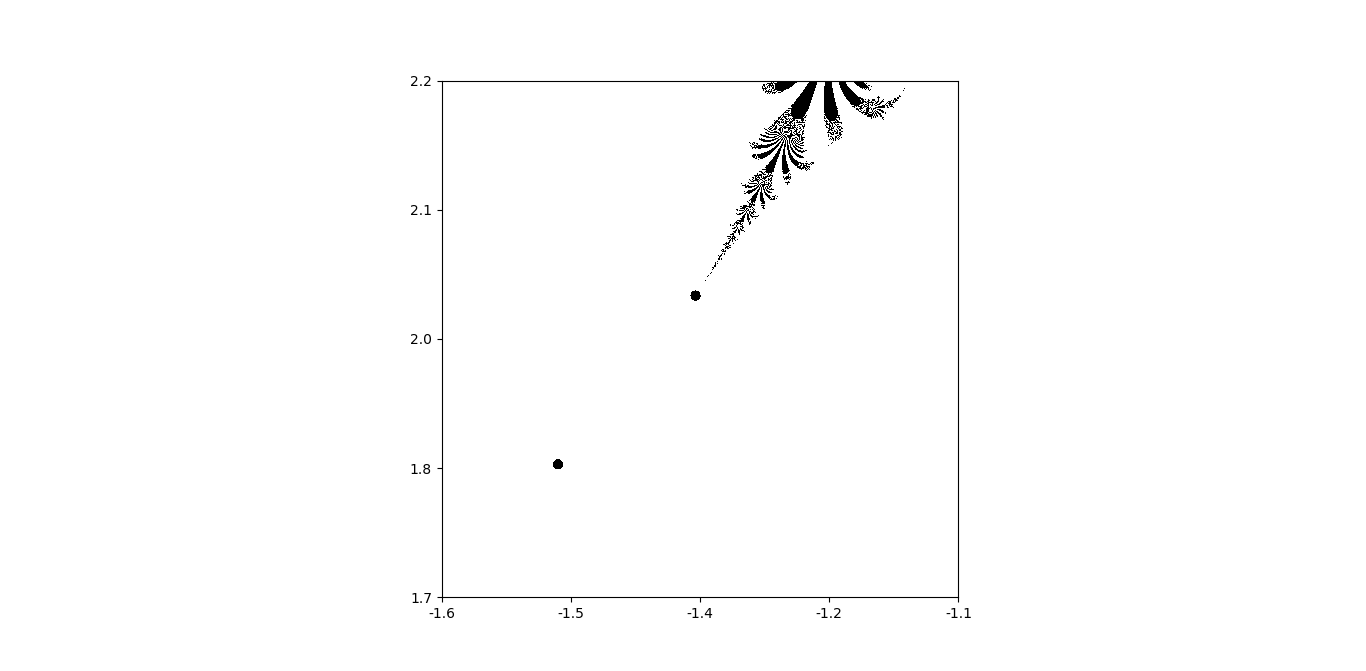}}
  \caption{Two views of the Julia set (black) of $f_{\alpha, \beta}$. The asymptotic values $\pm\alpha+\beta$ are denoted by black circles; in particular $\alpha+\beta$ is the top right circle, and is a parabolic fixed point.}\label{fig:2}
 \end{figure}

\begin{remarks}\normalfont
\mbox{ }
\begin{enumerate}
\item It follows by \cite[Theorem 1.2]{MR2753600} that every component of $I(f_{\alpha, \beta})$ is unbounded and path-connected; in the terminology of \cite{AnnaLasse}, $f_{\alpha, \beta}$ is \emph{criniferous}. It can be seen also that this function is \emph{postsingularly bounded}. We refer to \cite{AnnaLasse} for a definition, and further information on functions with this property.
\item From the computer pictures, it appears that $F(f_{\alpha, \beta})$ is connected, although we have not been able to prove this. If this were indeed the case, then it would follow at once that $BO(f)$ and $\BOhat(f)$ are connected.
\item Clearly, another approach to the proof of Theorem~\ref{theo:example} would be to find a {\tef} $f$ with an unbounded Fatou component in $BO(f)$ and an unbounded Fatou component in $BU(f)$; the result would follow by Theorem~\ref{theo:BOhat}. While this seems possible, it is also likely to be more complicated than the example given here, because of the difficulty of constructing {\tef}s with an unbounded Fatou component in $BU(f)$.
\end{enumerate}
\end{remarks}


%
%
%
\section{Results on spiders' webs}
\label{S:B}
%
Recall that if $f \in \B$, then $I(f) \subset J(f)$; see \cite[Theorem 1]{MR1196102}. It follows that Theorem~\ref{theo:classb} is an immediate consequence of the following.
\begin{theorem}
\label{theo:noloop}
Suppose that $f$ is a {\tef} with a finite logarithmic asymptotic value $\alpha \notin F(f) \cap I(f)$. Then $I(f)$ separates no finite point from infinity.
\end{theorem}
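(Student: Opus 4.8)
The plan is to argue by contradiction. Suppose that $I(f)$ separates some finite point $z_0$ from infinity, so that there is a bounded open set $U$ with $z_0 \in U$ and $\partial U \subset I(f)$. Filling in the bounded complementary components, I may take $U$ to be a bounded simply connected domain whose boundary is a continuum contained in $I(f)$. Since $I(f)$ is forward invariant, $f^n(\partial U) \subset I(f)$ for every $n$; and since $f$ is open with $f(U)$ a bounded open set satisfying $\partial f(U) \subset f(\partial U)$, the point $f(z_0)$ lies in a bounded complementary component of $f(\partial U)$, so by induction $f^n(\partial U)$ separates $f^n(z_0)$ from infinity for every $n$.

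The heart of the argument is to manufacture, from this separating loop, a loop in $I(f)$ that encloses $\alpha$ inside $B(\alpha, r)$. I would fix $r$, the tract $T$, and the data $R > 0$ and $V$ supplied by Lemma~\ref{lemm:tract}. Assuming first that $U$ meets $J(f)$, the blowing-up property (Lemma~\ref{lemm:blowup}) gives some $n_1$ with $f^{n_1}(U) \supset \overline{B(0,R_1)} \setminus W$, where $R_1 > R$ is large and $W$ is a small disc about the exceptional point chosen disjoint from $V$. Exactly as in the construction in the proof of Lemma~\ref{lemm:BUhat}, this yields a continuum $\gamma \subset f^{n_1}(\partial U) \subset I(f)$ with $\gamma \subset \C \setminus \overline{B(0,R)}$ such that $V$ lies in a bounded component of $T \setminus \gamma$. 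Lemma~\ref{lemm:tract} then produces a bounded simply connected set $S \subset B(\alpha, r)$ with $\alpha \in S$ and $\partial S \subset f(\gamma \cap T) \subset f^{n_1+1}(\partial U) \subset I(f)$; that is, $I(f)$ separates $\alpha$ from infinity by a loop lying in $\overline{B(\alpha,r)}$.

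From this $\alpha$-loop the contradiction follows by splitting on the location of $\alpha$ and invoking the hypothesis $\alpha \notin F(f) \cap I(f)$. If $\alpha \in F(f)$, then $\alpha \notin I(f)$, so the Fatou component containing $\alpha$ is non-escaping and meets $I(f)$ only in the empty set; shrinking $r$ so that $\overline{B(\alpha,r)}$ lies in this component forces $\partial S \subset I(f) \cap F(f) = \emptyset$, which is absurd. If instead $\alpha \in J(f)$, then $S$ meets $J(f)$, and I would iterate the alternating blow-up and Lemma~\ref{lemm:tract} construction precisely as in Lemma~\ref{lemm:BUhat}, combined with Lemma~\ref{lemm:RS}, to produce a point of $\partial S \subset I(f)$ whose orbit oscillates between $\overline{B(\alpha,r)}$ and the complements of large balls; such a point lies in $BU(f)$, contradicting $\partial S \subset I(f)$ since $I(f) \cap BU(f) = \emptyset$.

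The main obstacle is the step assuming that $U$ meets $J(f)$, which is what allows the blowing-up property to be applied. The remaining possibility is that $\overline{U} \subset F(f)$, in which case $\partial U$ is a continuum in a single escaping Fatou component $\Omega_0$, on which $f^n \to \infty$ locally uniformly; then $f^n(\partial U)$ escapes uniformly and eventually lies outside $\overline{B(0,R)}$, but it encloses the distant point $f^n(z_0)$ rather than $V$, so Lemma~\ref{lemm:tract} does not apply verbatim. Disposing of this degenerate case — either by showing such an escaping component is incompatible with a finite logarithmic asymptotic value satisfying the hypothesis, or by arranging the iterated loop to trap $V$ regardless — is where I expect the genuine difficulty to lie; the other delicate point is verifying the hypotheses of Lemma~\ref{lemm:tract}, namely that the extracted continuum $\gamma$ lies outside $\overline{B(0,R)}$ and genuinely confines $V$ to a bounded component of $T \setminus \gamma$.
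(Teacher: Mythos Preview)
Your approach is correct in substance but takes the long way round. The paper's proof is three lentences: if $\alpha\in J(f)$, Theorem~\ref{theo:BUhat} gives that $\BUhat(f)$ is connected; if $\alpha\in F(f)$, then $\alpha\in BO(f)\cup BU(f)$, the tract lies in an unbounded non-escaping Fatou component, and Theorem~\ref{theo:BOhat} applies. You instead re-run the underlying machinery of Lemmas~\ref{lemm:BOhat} and~\ref{lemm:BUhat} (blowing-up, Lemma~\ref{lemm:tract}, Lemma~\ref{lemm:RS}) from scratch. This works, but is redundant given what the paper has already packaged; your case $\alpha\in J(f)$ is essentially the proof of Lemma~\ref{lemm:BUhat} with ``$\partial U$ meets $BU(f)$'' replaced by ``$\partial U\not\subset I(f)$'', and your case $\alpha\in F(f)$ reproves the relevant instance of Lemma~\ref{lemm:BOhat}.

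On your identified obstacle $\overline{U}\subset F(f)$: you are right that your argument does not cover it, but neither does the paper's. In that situation $\overline{U}$ lies in a single Fatou component which, since $\partial U\subset I(f)$, must be escaping; hence the separated point $z_0$ is already in $I(f)$ and the separation is degenerate. Read literally, the stated conclusion ``separates no finite point'' is stronger than what either argument delivers --- note that the paper itself concludes only ``Hence $I(f)$ is not a spider's web'' in the first case, and the appeal to Theorem~\ref{theo:BOhat} in the second yields the same. That weaker conclusion is precisely what Theorem~\ref{theo:classb} needs, and for $f\in\B$ the distinction vanishes anyway since $I(f)\subset J(f)$ forces $\overline{U}\cap J(f)\ne\emptyset$. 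So your ``main obstacle'' is not a defect of your strategy relative to the paper's; it is a small imprecision in the theorem statement that both proofs share.
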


\begin{proof}
Suppose first that $\alpha \in J(f)$. It follows that $\BUhat(f)$ is connected, by Theorem~\ref{theo:BUhat}. Hence $I(f)$ is not a {\spw}. On the other hand, if $\alpha \in F(f)$, then either $\alpha \in BO(f)$ or $\alpha \in BU(f)$. In these cases the result follows by Theorem~\ref{theo:BOhat}.
\end{proof}

Next we prove Theorem~\ref{theo:spwcriterion}.
\begin{proof}[Proof of Theorem~\ref{theo:spwcriterion}]
One direction is immediate; it is easy to see from the definition that a set separates every point of $\C$ from infinity if it is a {\spw}.

In the other direction, we consider first the case for $I(f)$. Suppose that $I(f)$ separates a point of $J(f)$ from infinity. In other words, there is a bounded open set $U$ that meets $J(f)$ and the boundary of which lies in $I(f)$. Let $W$ be a disc containing any exceptional point of $f$. Suppose that $R>0$ is sufficiently large that $W \subset B(0, R)$. By Lemma~\ref{lemm:blowup} there exists $n = n(R) \in \N$ such that $\overline{B(0, R)}\setminus W \subset f^n(U)$. Since $I(f)$ is forward invariant, it follows that for all sufficiently large $R>0$, there is a bounded simply connected domain $G$ such that $B(0, R) \subset G$ and $\partial G \subset I(f)$.

The fact \cite[Theorem 1.1]{Rippon01102012} that $I(f)$ contains an unbounded component implies that $I(f)$ contains a {\spw}. The remark \cite[p.807]{Rippon01102012} then implies that $I(f)$ is a {\spw}.

The case for $A(f)$ is almost identical to that of $I(f)$, and is omitted. If $f$ has no multiply connected Fatou components, then it follows from \cite[Theorem 1]{MR1609471} that all components of $J(f)$ are unbounded. The result for $J(f)$ then follows in much the same way as that of $I(f)$.
\end{proof}
The following, which was promised in the introduction, is now quite straightforward.
\begin{corollary}
\label{cor:conn}
Suppose that $f$ is a {\tef}. Then $I(f)$ is a {\spw} if and only if $\BOhat(f) \cup \BUhat(f)$ is disconnected.
\end{corollary}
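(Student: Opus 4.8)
The plan is to read off the elementary identity $\BOhat(f) \cup \BUhat(f) = \widehat{\C} \setminus I(f)$, which holds because $\C$ is the disjoint union of $I(f)$, $BO(f)$ and $BU(f)$. The statement thus becomes: $I(f)$ is a \spw\ if and only if $\widehat{\C} \setminus I(f)$ is disconnected. I would prove the two implications separately, using Theorem~\ref{theo:spwcriterion} only for the harder direction.

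For the forward implication, suppose $I(f)$ is a \spw. By definition there are bounded simply connected domains $G_n$ with $\partial G_n \subset I(f)$ and $\bigcup_n G_n = \C$. Since $J(f)$ contains periodic points, I fix one, say $p$; its orbit is finite, so $p \in BO(f) \subset \widehat{\C} \setminus I(f)$. Then $p \in G_n$ for all large $n$, and for such an $n$ the disjoint open sets $G_n$ and $\widehat{\C} \setminus \overline{G_n}$ cover $\widehat{\C} \setminus I(f)$, since their common boundary lies in $\partial G_n \subset I(f)$. Each meets $\widehat{\C} \setminus I(f)$, the first at $p$ and the second at $\infty$, so $\widehat{\C} \setminus I(f)$ is disconnected.

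For the reverse implication, suppose $\widehat{\C} \setminus I(f)$ is disconnected. As in the proof of Theorem~\ref{theo:BOhat}, I would write this as a cover by disjoint open sets $H_1, H_2 \subset \widehat{\C}$, each meeting $\widehat{\C} \setminus I(f)$, with $\infty \in H_2$. Then $H_1$ is bounded, and $\partial H_1 \subset \widehat{\C} \setminus (H_1 \cup H_2) \subset I(f)$, so $H_1$ is a bounded open set with $\partial H_1 \subset I(f)$ that meets $BO(f) \cup BU(f)$. If $H_1$ also meets $J(f)$, then $I(f)$ separates a point of $J(f)$ from infinity, and Theorem~\ref{theo:spwcriterion} shows that $I(f)$ is a \spw, completing the proof.

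It remains to rule out the possibility that $H_1 \subset F(f)$, and this is where the main obstacle lies. The plan is to let $C$ be the Fatou component containing a point of $H_1 \cap (BO(f) \cup BU(f))$; since the points of a single Fatou component share their escaping, bounded or bungee behaviour, $C$ is disjoint from $I(f)$, hence from $\partial H_1$, so the connected set $C$ lies in $H_1$ and is bounded. As $H_1$ meets no point of $J(f)$, we get $\partial C \subset \partial H_1 \subset I(f)$. I must therefore show that a bounded Fatou component cannot have its whole boundary in $I(f)$. When $C$ is wandering this follows from Lemma~\ref{lemm:wandering}: since $C$ is disjoint from $I(f)$ we have $\omega(C,f) \neq \{\infty\}$, so a set of positive harmonic measure of boundary points $z$ satisfies $\omega(z,f) = \omega(C,f) \neq \{\infty\}$ and hence lies outside $I(f)$. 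When $C$ is (pre)periodic one argues instead that its cycle is bounded, so that $C \subset BO(f)$, and that the boundary of such a component necessarily contains non-escaping points. In either case $\partial C \not\subset I(f)$, a contradiction, so $H_1$ must meet $J(f)$ and the reverse implication follows.
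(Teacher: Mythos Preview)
Your argument is correct and follows essentially the same route as the paper: the forward direction is straightforward (the paper calls it ``immediate''), and for the reverse direction both you and the paper reduce to showing that the bounded open set with boundary in $I(f)$ meets $J(f)$, then invoke Theorem~\ref{theo:spwcriterion}. The only difference is cosmetic: where the paper writes ``arguing as in the proof of Lemma~\ref{lemm:BOhat}'' you spell out the Fatou-component case analysis explicitly, and your slightly vague phrase ``the boundary of such a component necessarily contains non-escaping points'' is exactly what that lemma's proof establishes (indeed $\partial C \subset BO(f)$, since a bounded preperiodic component has all forward images contained in finitely many bounded Fatou components).
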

\begin{proof}
One direction is immediate, and so we assume that $\BOhat(f) \cup \BUhat(f)$ is disconnected. Then there is a bounded open set $U$ such that $U$ meets $BO(f) \cup BU(f)$ and $\partial U \subset I(f)$. Arguing as in the proof of Lemma~\ref{lemm:BOhat}, we can deduce that $U \cap J(f) \ne \emptyset$. The result now follows by Theorem~\ref{theo:spwcriterion}.
\end{proof}
%
%
%
%
%
\emph{Acknowledgment:} The author is grateful to Lasse Rempe-Gillen, Phil Rippon, Gwyneth Stallard and John Osborne for useful conversations, and also to Ben and Pete Strulo for their help with Figure~\ref{fig:2}. 
\bibliographystyle{alpha}
\bibliography{../../../Research.References}

\begin{thebibliography}{RRRS11}

\bibitem[AS72]{standards}
Milton Abramowitz and Irene~A. Stegun, editors.
\newblock {\em Handbook of Mathematical Functions with Formulas, Graphs, and
  Mathematical Tables}.
\newblock National Bureau of Standards, 1972.

\bibitem[Ben17]{AnnaLasse}
L.~Benini, A; Rempe-Gillen.
\newblock A landing theorem for entire functions with bounded post-singular
  sets.
\newblock {\em Preprint, arXiv:1711.10780v2}, 2017.

\bibitem[Ber93]{MR1216719}
Walter Bergweiler.
\newblock Iteration of meromorphic functions.
\newblock {\em Bull. Amer. Math. Soc. (N.S.)}, 29(2):151--188, 1993.

\bibitem[Ber12]{MR2869069}
Walter Bergweiler.
\newblock On the set where the iterates of an entire function are bounded.
\newblock {\em Proc. Amer. Math. Soc.}, 140(3):847--853, 2012.

\bibitem[BH99]{MR1684251}
W.~Bergweiler and A.~Hinkkanen.
\newblock On semiconjugation of entire functions.
\newblock {\em Math. Proc. Cambridge Philos. Soc.}, 126(3):565--574, 1999.

\bibitem[Bis15]{Bish3}
Christopher~J. Bishop.
\newblock Constructing entire functions by quasiconformal folding.
\newblock {\em Acta Mathematica}, 214(1):1--60, 2015.

\bibitem[{\relax DLMF}]{NIST:DLMF}
{NIST} {D}igital {L}ibrary of {M}athematical {F}unctions.
\newblock http://dlmf.nist.gov/, Release 1.0.16 of 2017-09-18.
\newblock F.~W.~J. Olver, A.~B. {Olde Daalhuis}, D.~W. Lozier, B.~I. Schneider,
  R.~F. Boisvert, C.~W. Clark, B.~R. Miller and B.~V. Saunders, eds.

\bibitem[EL87]{MR918638}
A.~E. Eremenko and M.~Yu. Lyubich.
\newblock Examples of entire functions with pathological dynamics.
\newblock {\em J. Lond. Math. Soc. (2)}, 36(3):458--468, 1987.

\bibitem[EL92]{MR1196102}
A.~E. Eremenko and M.~Yu. Lyubich.
\newblock Dynamical properties of some classes of entire functions.
\newblock {\em Ann. Inst. Fourier (Grenoble)}, 42(4):989--1020, 1992.

\bibitem[Ere89]{MR1102727}
A.~E. Eremenko.
\newblock On the iteration of entire functions.
\newblock {\em Dynamical systems and ergodic theory ({W}arsaw 1986)},
  23:339--345, 1989.

\bibitem[Evd16]{Vasso1}
V.~Evdoridou.
\newblock Fatou's web.
\newblock {\em Proc. Amer. Math. Soc.}, 144(12):5227--5240, 2016.

\bibitem[Kis98]{MR1609471}
Masashi Kisaka.
\newblock On the connectivity of {J}ulia sets of transcendental entire
  functions.
\newblock {\em Ergodic Theory Dynam. Systems}, 18(1):189--205, 1998.

\bibitem[Laz17]{Lazebnik2017611}
Kirill Lazebnik.
\newblock Several constructions in the {E}remenko-{L}yubich class.
\newblock {\em Journal of Mathematical Analysis and Applications}, 448(1):611
  -- 632, 2017.

\bibitem[ORS17]{Kfc}
J.~W. Osborne, P.~J. Rippon, and G.~M. Stallard.
\newblock Connectedness properties of the set where the iterates of an entire
  function are unbounded.
\newblock {\em Ergodic Theory Dynam. Systems}, 37(4):1291--1307, 2017.

\bibitem[OS16]{OsSix}
John~W. Osborne and David~J. Sixsmith.
\newblock On the set where the iterates of an entire function are neither
  escaping nor bounded.
\newblock {\em Ann. Acad. Sci. Fenn. Math.}, 41(2):561--578, 2016.

\bibitem[Osb13]{MR3118409}
John~W. Osborne.
\newblock Connectedness properties of the set where the iterates of an entire
  function are bounded.
\newblock {\em Math. Proc. Cambridge Philos. Soc.}, 155(3):391--410, 2013.

\bibitem[RRRS11]{MR2753600}
G{\"u}nter Rottenfusser, Johannes R{\"u}ckert, Lasse Rempe, and Dierk
  Schleicher.
\newblock Dynamic rays of bounded-type entire functions.
\newblock {\em Ann. of Math. (2)}, 173(1):77--125, 2011.

\bibitem[RS11]{MR2801622}
P.~J. Rippon and G.~M. Stallard.
\newblock Boundaries of escaping {F}atou components.
\newblock {\em Proc. Amer. Math. Soc.}, 139(8):2807--2820, 2011.

\bibitem[RS12]{Rippon01102012}
P.~J. Rippon and G.~M. Stallard.
\newblock Fast escaping points of entire functions.
\newblock {\em Proc. London Math. Soc. (3)}, 105(4):787--820, 2012.

\bibitem[Six11]{MR2838342}
D.~J. Sixsmith.
\newblock Entire functions for which the escaping set is a spider's web.
\newblock {\em Math. Proc. Cambridge Philos. Soc.}, 151(3):551--571, 2011.

\bibitem[Six15]{Sixsmithmax}
David~J. Sixsmith.
\newblock Maximally and non-maximally fast escaping points of transcendental
  entire functions.
\newblock {\em Math. Proc. Cambridge Philos. Soc.}, 158(2):365--383, 2015.

\end{thebibliography}
\end{document}